\documentclass[12pt]{amsart}
\usepackage{amsmath,amsthm,amssymb}
\DeclareFontFamily{U}{mathc}{}
\DeclareMathAlphabet{\mathcal}{U}{mathc}{m}{it}

\usepackage[mathscr]{euscript}

\usepackage{enumitem}
\newtheorem{theorem}{Theorem}
\newtheorem{prop}[theorem]{Proposition}
\newtheorem{lemma}[theorem]{Lemma}

\newtheorem*{corstar}{Corollary}

\theoremstyle{definition}
\newtheorem{defn}{Definition}
\newtheorem{remark}{Remark}

\newcommand{\F}{{\EuScript F}}

\newcommand{\scV}{{\EuScript V}}

\newcommand{\PP}{\mathbb P}
\newcommand{\R}{\mathbb R}
\newcommand{\C}{\mathbb C}

\newcommand{\CP}{\C\PP}

\newcommand{\CH}{\C{\mathrm H}}
\newcommand{\RH}{\R{\mathrm H}}

\newcommand{\rhtwo} {\RH^2}

\newcommand{\cpn} {\CP^n}
\newcommand{\chn} {\CH^n}

\newcommand{\cpone} {\CP^1}
\newcommand{\cptwo} {\CP^2}
\newcommand{\chone} {\CH^1}
\newcommand{\chtwo} {\CH^2}

\newcommand{\ppt} {\mathrm p}
\newcommand{\qpt} {\mathrm q}
\newcommand{\xpt} {\mathrm x}

\def\X{{\mathscr X}} \def\({\left(}
\def\){\right)}
\def\<{\langle}
\def\>{\rangle}
\def\a {\alpha}
\def\b {\beta}
\def\l {\lambda}

\def\teps {\tilde {\epsilon}}

\def\sech{\text{sech}}
\def\csch{\text{csch}}
\def\bi{\bigskip}
\def\me{\medskip}

\def\g{\gamma}

\newcommand{\I}{{\mathcal I}}

\newcommand{\W}{{\EuScript W}}

\newcommand{\w}{\omega}

\newcommand{\setU}{\EuScript U}

\newcommand{\e}{\mathbf e}

\newcommand{\bv}{\mathbf v}

\newcommand{\bp}{\mathbf p}
\newcommand{\bq}{\mathbf q}

\newcommand{\bxi}{\boldsymbol \xi}

\newcommand{\bz}{\mathbf z}
\newcommand{\be}{\mathbf e}
\newcommand{\boldE}{\mathbf E}
\newcommand{\bR}{\mathbf R}
\newcommand{\bU}{\mathbf U}
\newcommand{\bdelta}{\boldsymbol \delta}
\newcommand{\bzeta}{\boldsymbol \zeta}

\newcommand{\ri}{\mathrm i}
\newcommand{\realpart}{\operatorname{Re}}
\newcommand{\imagpart}{\operatorname{Im}}

\newcommand{\JJ}{{\mathrm J}}   \newcommand{\di}{\partial}

\newcommand{\restr}{\negthickspace \mid}

\newcommand{\mt}{\widetilde M}
\def\intprod{\mathbin{\raisebox{.4ex}{\hbox{\vrule height .5pt width 5pt depth 0pt \vrule height 3pt width .5pt depth 0pt}}}}

\def\&{\wedge}
\def\s{\sigma}
\def\a{\alpha}
\def\b{\beta}

\def\na{\nabla}
\def\nt{\widetilde \nabla}

\begin{document}
\title{Cohomogeneity-one ruled hypersurfaces in $\cptwo$ and $\chtwo$}
\author{Thomas A. Ivey}
\address{Dept. of Mathematics and Statistics, College of Charleston}
\email{iveyt@cofc.edu}
\author{Patrick J. Ryan}
\address{Department of Mathematics and Statistics, McMaster University}
\email{ryanpj@mcmaster.ca}
\date{\today}
\maketitle
\begin{abstract}
In this paper, we show how to construct a special class of ruled hypersurfaces in the nonflat complex space forms $\cpn$ and $\chn$.  This is done by taking an arbitrary smooth curve in  a totally geodesic (complex) one dimensional submanifold and erecting an orthogonal ruling over each of its points.  Concentrating on the $n=2$ case, we also examine the special situation in which the base curve has constant geodesic curvature.  We show that, in this case, the construction yields precisely the real-analytic hypersurfaces of cohomogeneity one that satisfy a certain transversality condition.
\end{abstract} 

\section{Introduction}
\label{intro}
\def\Ma{{\mathscr X}} \def\Mq{\widetilde{\mathscr X}} \def\Mh{{M}} \def\mpt{{\mathrm p}}  \def\qpt{{\mathrm q}}   \newcommand{\gam}{\mbox{\raisebox{.43ex}{$\boldsymbol\gamma$}}} \def\mysetminus{\setminus} 
Among real hypersurfaces in the non-flat complex space forms $\cpn$ and $\chn$,
there are two interesting and contrasting classes, the Hopf hypersurfaces and the ruled hypersurfaces.
Each class may be defined by a simple condition on the shape operator (see \S\ref{background} for definitions).

The simplest Hopf hypersurfaces are the homogeneous ones and are characterized by constancy of their principal curvatures, \cite {kimura19xx}, \cite {takagi1973}, \cite{takagi1975}.
Although there has been a significant amount of study of ruled hypersurfaces since their introduction by Kimura \cite{kimura1987} in 1987, an identifiable class of ``simplest" ruled hypersurfaces has not emerged.  About 20 years ago, the second author constructed such a class of examples, but lacking an appropriate characterization, the construction never appeared in print.  We now have a characterization theorem whose proof we present for $n=2$.  We expect that similar results hold in higher dimensions.

Ruled hypersurfaces are foliated by codimension-one  ``rulings".  These are totally geodesic complex hypersurfaces of  $\cpn$ (resp. $\chn$) and are open subsets of complex projective spaces (resp. complex hyperbolic spaces) of complex dimension $n-1$.  Our construction proceeds by choosing a totally geodesic complex submanifold $M_0$ of dimension 1 (for example, a $\cpone$ in $\cpn$), taking a (real) curve in $M_0$, then ``erecting" a ruling over each point 
$\ppt$ on the curve.  This is done by choosing the unique totally geodesic complex hypersurface perpendicular to $M_0$ at $\ppt$.  

We know that $\cpone$ (resp. $\chone$) is isometric to a 2-sphere of constant positive curvature (resp. a real hyperbolic plane of constant negative curvature).  If our curve is chosen to be of constant geodesic curvature, our construction 
(see \S \ref{coho-1-Construction}) 
gives rise to the ``simplest" examples.  These hypersurfaces satisfy the following properties:
\begin{enumerate}
\item They are real-analytic ruled hypersurfaces;
\item They are of cohomogeneity one;
\item They satisfy a transversality condition.
\end{enumerate}
For clarification of this terminology, see \S \ref{background}.

\vspace{0.5cm}

This paper presents two major results:
\begin{itemize}
\item
The General Case:  Construction of a ruled hypersurface over an arbitrary plane curve $\gam$ and determination of its characteristic parameters $\a$ and $\b$ in terms of $\gam$ (sections 1--5);
\item
The Cohomogeneity-One Case:  Description of the hypersurfaces resulting when $\gam$ has constant geodesic curvature and proof that these are precisely the ruled hypersurfaces satisying the three properties mentioned above (sections 6--7).
\end{itemize}

In this paper, 
all manifolds are assumed connected and all manifolds and maps are assumed
smooth (class $C^\infty$) unless otherwise stated.  Notation and definitions
not explicitly introduced may be found in \cite{KN} or \cite{nrsurvey}.  For general background on hypersurfaces in complex space forms (resp. moving frames and exterior differential systems), we refer the reader to \cite{crbook} (resp. \cite{CfB2}).

 \section{General Background and Terminology}

\label{background} 
We recall the canonical construction of $\cpn$ and $\chn$ (see \cite{nrsurvey}, pp.235-236). 
The complex space form $\cpn$ (resp. $\chn$) is based on an inner product on $\C^{n+1}$ that is positive definite (resp. indefinite).  In fact, as a point set, $\chn$ is the subset of $\cpn$ determined by $\<\bz, \bz \> <0$ for $\bz \in \C^{n+1}$.  In either case, the inner product is denoted by $\<\ ,\ \>$.  

When $\<\ ,\ \>$ is indefinite, we will also make use of the subset of $\cpn$ determined by $\<\bz, \bz \> >0$.  This is an indefinite K{\"a}hler manifold $\cpn_1=\cpn \mysetminus \overline{\chn}$ of constant positive holomorphic curvature $-4c>0$. The set $\overline{\chn}$ is determined by $\<\bz, \bz \> \le 0$.

For an element $T$ of the special unitary group $U(n+1)$ (resp. $SU(n,1)$), let $\widetilde T(\pi \bz) = \pi (T\bz)$.  Then $\widetilde T$ is well-defined and is an isometry of $\cpn$ (resp. $\chn$).  When discussing hypersurfaces, the set of all such $\widetilde T$ is called the {\it ambient isometry group} since it is the group of isometries of the ambient space $\cpn$ (resp. $\chn$).  In the indefinite case, $\widetilde T$ also preserves $\cpn_1$ and the {\it ideal boundary} of $\chn$ which is $\{\pi \bz| \<\bz,\bz\>=0\}$.

For a
hypersurface $M^{2 n - 1}$ in a complex space form $\cpn$ or $\chn$, the
structure vector is $W = - J \xi$ where $\xi$ is a locally-defined unit
normal. The $(2n-2)$-dimensional distribution $W^\perp$ is called the
holomorphic distribution.  
The shape operator $A$ is defined by
$$\nt_X \xi = - A X$$
where $\nt$ is the Levi-Civita connection of the ambient space. 
The tangential part of the complex structure $J$
is denoted by $\varphi$.  It annihilates $W$, acts like an almost complex
structure on $W^\perp$, and is related to the shape operator by the formula
\begin{equation}
\nonumber \na_X W=\varphi AX.
\end{equation}
The Gauss
equation expresses the curvature tensor of $M$ in terms of $A$ and $\varphi$ as
follows:
$$R(X,Y)=AX\wedge AY+c\(X\wedge Y+\varphi X\wedge \varphi Y +2 \<X,\varphi Y\>
\varphi\),$$ where $4c$ is the holomorphic curvature of the complex space form.
For convenience, we also use the positive constant $r$ satisfying 
$c = \pm 1/r^2$.

\begin{defn} If $W$ is a principal vector everywhere
(i.e., $A W = \a W$ for a smooth function $\a$), we say that $M$ is a {\it Hopf hypersurface}.
On the other hand, $M^{2n-1}$ is a {\it ruled hypersurface} if its shape operator
$A$ satisfies $A W^\perp \subseteq {\text{span} \ } W$.  
\end{defn}
It is not difficult to show that these are non-intersecting classes -- no Hopf hypersurface is ruled and no ruled hypersurface is Hopf.

For a ruled hypersurface, the range of $A$  lies in the span of $\{AW, W\}$, and so $\text{rank\ } A \le 2$ everywhere. 
We let \[\a = \<AW, W\>, \qquad \b = |AW - \a W|. \]
We note that $\b$ is continuous and globally defined on $\Mh$,
although it may fail to be smooth at points where it vanishes.
Every ruled hypersurface has a point where $\b > 0$ (otherwise, it would be Hopf).  
Such a point has a neighborhood where $\b$ remains positive and we can write $AW = \a W + \b U$ for a uniquely determined unit vector
field $U$.  The pair $\{W, U\}$ is orthonormal and  $AZ = 0$ for all $Z$ orthogonal to $W$ and $U$.

Note that $\b$ cannot vanish on any open subset of a ruled hypersurface.  However, it can vanish at isolated points and even on two-dimensional submanifolds.

\subsubsection* {Notational Clarification} 

To avoid confusion with terminology used by other sources, we establish the following:
\begin{itemize}
\item
For a curve with unit tangent vector $T$ in a Riemannian manifold with Levi-Civita connection $\nabla$, the {\it geodesic curvature} $\kappa_g$ is $|\nabla_T T|$.  Thus, geodesics are characterized by the condition $\kappa_g =0$.
\item
Let $(\Mh,G)$ be a pair consisting of a hypersurface $\Mh$ in a Riemannian manifold $\mt$ and a group $G$ of isometries of $\mt$ that preserves $M$. The action is said to be of {\it cohomogeneity $m$} if $m$ is the codimension in $M$ of an orbit $G\cdot\ppt$ of maximum dimension.  If $m=0$, then $\Mh$ is homogeneous, i.e., for any two points ${\xpt}_1$ and ${\xpt}_2$ in $\Mh$, there is a $g \in G$ such that $g {\xpt}_1= {\xpt}_2$.  If $m=1$, the orbits of codimension 1 are dense in $\Mh$.
\item
When $\mt = \cpn$, the homogeneous hypersurfaces are all Hopf and belong to a specific list (see Takagi's list, \cite{crbook}, p.350).
When $\mt = \chn$, there is an analogous list of homogeneous Hopf hypersurfaces (see Montiel's list, \cite{crbook} p.352), but there are also non-Hopf homogeneous hypersurfaces which have been classified by Berndt and Tamaru \cite{berndttamaru}.  Among the non-Hopf homogeneous hypersurfaces, however, only one is ruled, namely Lohnherr's hypersurface (see \cite {crbook}, p.524).
\item
Let $\Mh$ be a ruled hypersurface of cohomogeneity $m$ in $\cpn$ or $\chn$. We say that $\Mh$ satisfies the {\it transversality condition} at a point $\ppt$ if  the tangent space to the orbit through $\ppt$ and the tangent space to the ruling through $\ppt$ are transverse subspaces of $T_\ppt \Mh$.  We also say the the group {\it acts transversely} at $\ppt$ in this case. 

\end{itemize}
 
\section{Characterization Theorems}

Here are our results for $n=2$:

\begin{theorem} \label{main-theorem}
Let $\Mh$ be a real-analytic ruled hypersurface of cohomogeneity one in a non-flat complex space form $\Ma$ (either $\cptwo$ or $\chtwo$) satisfying the transversality condition at every point.
Suppose that $\beta^2 +c$ is not identically zero on $\Mh$.  (Of course, if $c>0$ this condition is vacuous.)
Then 
\begin{enumerate}[label={\rm (\roman*)},ref={(\roman*)}]
\item $\beta^2+c$ is nonvanishing on a dense  open subset $\setU \subset \Mh$, and cannot change sign on $\Mh$;
\item there exists a complex projective line $\ell$ 
and a curve $\gam$ in $\ell$ 
such that each ruling of $\Mh$ is an open subset of a complex projective line $\ell_{\qpt}^{\perp}$ that passes through a
point $\qpt \in\gam$ and is perpendicular to $\ell$ at $\qpt$;
\item $\gam$ has constant geodesic curvature;
\end{enumerate}
Note that the line $\ell$ must be the projectivization of a subspace of $\C^3$ to which the inner product 
$\<\ , \  \>$ restricts to be nondegenerate.
When $\Ma = \chtwo$ we furthermore have:
\begin{enumerate}[resume,label={\rm (\roman*)},ref={(\roman*)}]
\item if $\beta^2+c < 0$ on $\setU$ then $\ell$ intersects $\chtwo$ in a totally geodesic $\chone$ containing $\gam$;
\item if $\beta^2+c > 0$ on $\setU$ then $\ell$ is totally geodesic in the indefinite K\"ahler manifold $\cptwo_1$ and does not intersect $\chtwo$.
\end{enumerate}
\end{theorem}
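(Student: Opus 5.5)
The plan is to work with moving frames adapted to the ruled structure and use the structure equations of a ruled hypersurface in $\cptwo$ or $\chtwo$.

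\textbf{Step 1: frame and structure equations.} On the open set where $\b>0$, take the orthonormal frame $\{W,U,\varphi U\}$. The ruling through a point is the integral surface of the holomorphic distribution $W^\perp=\text{span}\{U,\varphi U\}$, which is the complex line tangent to $W^\perp$. The Codazzi equation applied to $AW=\a W+\b U$, $AU=\b W$, $A\varphi U=0$ gives the standard ruled-hypersurface relations. In particular, along each ruling $\b$ satisfies a Riccati-type equation
\[
U\b=\b^2+c,\qquad (\varphi U)\b=0,
\]
together with corresponding first-order equations for $\a$ and for the connection forms. The precise signs will be fixed by the computation.

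\textbf{Step 2: reduce to functions of one variable.} Cohomogeneity one with transversality means the following. At each point the orbit is a surface, and it meets the two-dimensional ruling transversely in the three-dimensional $T\Mh$. Since $G$ preserves $A$ up to sign, the invariants $\a$ and $\b$ are constant along orbits. Transversality then implies that, restricted to a single ruling, they depend on one parameter only. Real-analyticity gives:
\begin{itemize}
\item[$\circ$] $\b^2+c\not\equiv0$ forces its zero set to be a proper analytic subset, so it is nowhere dense. This yields $\setU$.
\item[$\circ$] By the Riccati equation, the locus $\b^2=-c$ is invariant along rulings, so $\b^2+c$ cannot change sign across it. This proves (i).
\end{itemize}

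\textbf{Step 3: find the line $\ell$.} On $\setU$, the Riccati equation integrates along each ruling. The quantity $\b^2+c$ distinguishes a special point on the projective closure of each ruling, namely a point where $\b$ would be infinite, or a focal point. Equivalently, I would look for a vector field, a combination of $W$ and $U$ with coefficients built from $\a,\b$, whose span together with its $J$-image is parallel in the ambient space. Concretely, I would lift to $\C^3$ using a frame $(\bz,\e_1,\e_2)$ with $\bz$ projecting to the point, $\e_1,\e_2$ spanning the ruling direction and the normal. I would then show that the complex line $L=\text{span}\{\bz+\lambda\e_1\}^{\perp}$ attached to each ruling is actually constant. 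The natural candidate is the orthogonal complement, in $\C^3$, of the $2$-plane defining the ruling, which is a single point $\qpt$. Then $\ell$ is the fixed complex line containing all these polar points, and each ruling is $\ell^\perp_\qpt$. The key computation is that the derivative of the polar point of the ruling lies in a fixed $2$-plane $P$. This should follow from the structure equations once the cohomogeneity assumption kills the ``torsion'' terms. Setting $\ell=\PP(P)$, $\ell^\perp_\qpt$ is then perpendicular to $\ell$ at $\qpt$, giving (ii). Nondegeneracy of $\<\,,\,\>$ on $P$ follows since the polar points are non-null, because $\b^2+c\neq0$. The sign of $\b^2+c$ corresponds to the sign of $\<\bq,\bq\>$, which yields (iv) and (v).

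\textbf{Step 4: constant curvature.} The curve $\gam$ of polar points is the image of an orbit projected to $\ell$. $G$ preserves $\Mh$, hence preserves the family of rulings, hence preserves $\ell$ and $\gam$. It acts on $\gam$ by isometries of $\ell$, transitively on an open arc, since orbits are transverse to rulings. Therefore the geodesic curvature of $\gam$ is constant, giving (iii).

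I expect Step 3 to be the main obstacle: proving that the polar points sweep out only a complex line rather than an open set of $\cptwo$. If the direct route fails, I would instead use the group. The stabilizer of a ruling acts on it, and the orbit-transverse condition forces $G$ to have a $1$-dimensional subgroup preserving a point, whose fixed-point structure produces $\ell$.
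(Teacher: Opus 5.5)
Your overall picture is right: a Riccati equation along rulings, a distinguished point on each extended ruling (where $\beta$ ``becomes infinite''), its polar line, a norm whose sign is that of $\beta^2+c$, and symmetry for the curvature. However, two steps do not go through as written.

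\textbf{The sign claim in (i).} Knowing that $\{\beta^2+c=0\}$ is a union of pieces of rulings does not prevent a sign change. A single ruling can separate a region where $\beta^2+c>0$ from one where it is negative. (Also, the Riccati equation is $(\varphi U)\beta=\beta^2+c$, $U\beta=0$, though only its tangency to the ruling matters.) You must bring in the group, as follows.
\begin{itemize}
\item At a zero $\mathrm p_0$ we have $\beta=1/r>0$, so the adapted frame and the Riccati equation are available nearby.
\item The zero set is $G$-invariant, and by transversality the orbit of $\mathrm p_0$ leaves the ruling through $\mathrm p_0$.
\item Saturating that orbit by rulings gives an open set on which $\beta^2+c=0$, so $\beta^2+c\equiv 0$ by analyticity.
\end{itemize}
Done this way you get the stronger fact that $\beta^2+c$ never vanishes. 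This is shorter than the paper's route (two focal points $\mathrm z^\pm$, analytic continuation along a curve, contradiction with transversality), but the argument has to be made.

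\textbf{Step 3.} The core of (ii) is asserted rather than proved, and two different points are conflated.

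What must be shown fixed is $\mathrm z=\pi(\mathbf E_1-\ri r\beta\mathbf E_0)$, which is your $\bz+\lambda\e_1$ with $\lambda=\ri/(r\beta)$; then $\ell$ is the polar line of $\mathrm z$. This is not automatic. Write $d\beta=(\beta^2+c)\omega^2+p\,\omega^3$ and $\beta\,\omega^2_1=(\beta^2-c)\omega^1+q\,\omega^3$. Then the derivative of $\mathbf E_1-\ri r\beta\mathbf E_0$ picks up $\omega^3$-terms proportional to $q\,\mathbf E_1$ and $p\,\mathbf E_0$, so $\mathrm z$ moves unless $p=q=0$. The vanishing comes from cohomogeneity one:
\begin{itemize}
\item Codazzi forces the $\omega^1$-component of $d\alpha$ to equal $p$, so the $\omega^1\wedge\omega^3$-coefficient of $d\alpha\wedge d\beta$ is $p^2$. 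Linear dependence of $d\alpha,d\beta$ then gives $p=0$.
\item With $p=0$, the identity $d(d\beta)=0$ yields $q(\beta^2+c)=0$.
\end{itemize}
This is what ``kills the torsion''.

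Next, a ruling meets $\ell$ at $\mathrm q=\pi(\mathbf E_0-\ri\epsilon r\beta\mathbf E_1)$, not at its pole $\pi(\mathbf E_2)$. The pole does lie on $\ell$; that is the dual form of ``every ruling passes through $\mathrm z$''. But a ruling never contains its own pole, so your identification of $\mathrm q$, and hence of the curve, is wrong. Likewise, the poles are always unit spacelike, so they cannot give nondegeneracy or (iv)/(v). The relevant norms are $(\beta^2+c)/c$ for $\mathbf E_1-\ri r\beta\mathbf E_0$ and $r^2(\beta^2+c)$ for $\mathbf E_0-\ri\epsilon r\beta\mathbf E_1$.

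You also need a single $\mathrm z$ for all of $M$, not one per component of $\{\beta>0\}$. The paper obtains this by analytic continuation of the condition that the ruling passes through $\mathrm z$.

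\textbf{Step 4.} Once Step 3 is repaired, your Step 4 is valid and genuinely different from the paper. The paper computes the curvature as $|\alpha|/(r\sqrt{|\beta^2+c|})$ and uses constancy of $\alpha^2/(\beta^2+c)$. Your argument goes through as follows. $G$ permutes the extended rulings, which meet pairwise only at $\mathrm z$, so $G$ fixes $\mathrm z$. Hence $G$ preserves $\ell$ and the curve of feet. By transversality at every point, $G$ acts transitively on that connected curve by isometries of $\ell$, so its geodesic curvature is constant.
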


\begin{theorem}\label{side-theorem}
Let $\Mh$ be a real-analytic ruled hypersurface in $\chtwo$ on which $\beta^2+c$ is identically zero, and which is of cohomogeneity {\em at most one} -- in other words, there is a group $G$ of isometries of $\chtwo$ that preserves $M$, and such that $M$ is the union of two-dimensional orbits and satisfies the transversality condition.  
Then $\Mh$ is homogeneous -- i.e., there is a larger group $H$ of isometries acting transitively on $\Mh$.  (This means that $\Mh$ is a Lohnherr hypersurface.)
\end{theorem}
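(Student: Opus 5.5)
We work throughout in the orthonormal frame on the open set where $\beta>0$, determined by $AW=\alpha W+\beta U$. Here $\{W,U,\varphi U\}$ is an orthonormal frame, the ruling distribution is $\mathrm{span}\{U,\varphi U\}$ (complex lines in $W^\perp$), and $A U=\beta W$, $A\varphi U=0$.

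\textbf{Structure equations.} First we write the Codazzi equations for a ruled hypersurface in this frame. These give $\varphi U(\beta)$, $U(\beta)$ and the connection forms in terms of $\alpha,\beta$ and $W$-derivatives. The known consequence is that a ruled hypersurface in $\chtwo$ with $\beta^2+c\equiv 0$ is locally determined by a much smaller set of data: $\beta=1/r$ is constant, and the derivatives of $\alpha$ are constrained by the Codazzi system. Concretely, I expect the equations to give $U(\beta)=\beta^2+c=0$ and to express $W(\beta)$ linearly in $\alpha$. Since $\beta>0$ is constant, the frame is smooth on all of $\Mh$.

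\textbf{Reduction to an ODE.} Next we use the group. Since $G$ has two-dimensional orbits transverse to the rulings, every $G$-invariant function on $\Mh$ is locally a function of one variable along the rulings. The invariant functions include $\alpha$ and $\beta$, and by real-analyticity any other invariants built from the frame are as well. Since $G$ preserves the rulings, $\alpha$ restricted to each ruling determines it everywhere. The Codazzi equations then become ODEs for $\alpha$ along a geodesic of the ruling, say in the direction $\varphi U$. With $\beta^2+c=0$, I expect these to force either $\alpha$ constant (the equation along $U$ is linear homogeneous in $\alpha$ with a consistent-only-if-constant right side), or a specific one-parameter family differing only by a ruling-preserving isometry.

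\textbf{Homogeneity.} The final step is to show that $\alpha$ and the connection coefficients are constant in the frame $\{W,U,\varphi U\}$. The fundamental theorem for hypersurfaces in complex space forms (congruence by the Gauss--Codazzi data $(A,\varphi)$ together with frame) then gives, for any two points, an ambient isometry carrying one to the other, with frames matched. The set $H$ of such isometries acts transitively, so $\Mh$ is an open piece of a homogeneous ruled hypersurface. By the Berndt--Tamaru classification cited above, the only homogeneous ruled hypersurface is Lohnherr's, and real-analyticity extends the identification globally. Alternatively, we can compare directly with the known invariants of the Lohnherr hypersurface: $\alpha=0$ and $\beta=1/r$.

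\textbf{Main obstacle.} The hard part is the middle step: showing that the cohomogeneity-one symmetry together with transversality forces $\alpha$ to be constant rather than merely invariant. In particular, we must rule out a nonconstant solution of the ODE system that is nonetheless compatible with the integrability conditions $d^2=0$ of the frame. My first attempt would be to differentiate the Codazzi identities once more and eliminate the derivatives, obtaining a polynomial relation in $\alpha$ with constant coefficients. Analyticity would then force $\alpha$ to be constant.
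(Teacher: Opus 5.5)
\noindent\textbf{Where the proposal fails.} The gap is the step you flag yourself, and more differentiation cannot close it, because $\alpha$ need not be constant. Since $\beta\equiv 1/r$, the structure equations in the frame $e_1=U$, $e_2=\varphi U$, $e_3=W$ (note that $\varphi U(\beta)$, not $U(\beta)$, equals $\beta^2+c$) reduce to $d\alpha=(\alpha/r+q)\,\omega^2+s\,\omega^3$ and $\omega^2_1=\tfrac{2}{r}\,\omega^1+rq\,\omega^3$. $G$-invariance of $s=W(\alpha)$ then forces $q=0$, exactly as in the first half of the proof of Proposition~\ref{prop-case2}. Hence $U(\alpha)=0$ and $\varphi U(\alpha)=\alpha/r$, so ``$\alpha$ constant'' means ``$\alpha\equiv0$''. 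But the equations are consistent with $\alpha\neq0$. For example, take $s=0$ and $\alpha=k\,e^{\tau/r}$ with $d\tau=\omega^2$; $\omega^2$ is closed once $q=0$, and this gives a Frobenius system for every constant $k$. $G$-invariance only makes $\alpha$ constant on orbits. So any relation forcing $\alpha\equiv0$ would have to come from the group, and in general it does not.

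\noindent\textbf{Comparison with the paper, and why the statement fails.} The paper never tries to prove $\alpha$ constant. From $q=0$ it shows, via projective frames, that all extended rulings pass through one ideal point $z'$ fixed by $G$. It then analyses $\mathfrak g$ inside the $5$-dimensional stabilizer $\mathfrak k$ of $z'$, aiming to embed $\mathfrak g$ in a $3$-dimensional subalgebra $\mathfrak h$ that is transitive on a ruling.

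That same picture yields a counterexample. In the coordinates of the paper's proof, the lines through $z'=\pi(1,0,0)$ are $\{z_1=\gamma z_2\}$ with $\gamma\in\mathbb C$. $K$ acts on $\gamma$ by similarities $\gamma\mapsto\mu\gamma+\nu$ and multiplies the horospherical height $\mathrm{ht}=(2\operatorname{Re}(z_0\bar z_2)+|z_1|^2)/|z_2|^2$ by $|\mu|^2$. Let $M$ be the union, inside $\mathbb C\mathrm H^2$, of these lines over a circle $C$ in the $\gamma$-plane.
\begin{itemize}
\item $M$ is real-analytic, closed and ruled.
\item $\beta^2+c\equiv0$: the limit point of neighbouring rulings is $\pi(\mathbf E_1-\mathrm i r\beta\mathbf E_0)$, of norm $(\beta^2+c)/c$, and here it is the null point $z'$.
\item The group generated by $\exp(t\mathtt m_2)$ and the rotations about the centre of $C$ has $2$-dimensional orbits $M\cap\{\mathrm{ht}=\text{const}\}$, transverse to the rulings.
\item $M$ is not homogeneous. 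Any isometry preserving $M$ (or any open piece of it) permutes the rulings, so it fixes $z'$ and induces a similarity preserving $C$. Then $|\mu|=1$, so $\mathrm{ht}$ is preserved. In particular $\alpha\not\equiv0$ on $M$.
\end{itemize}
The paper's proof breaks in case (1) of Proposition~\ref{algprop} when $x_1=0$. There $a$ need not be real, and $[\mathtt m_1,\mathtt w]=\mathtt w-(\operatorname{Re}a)\,\mathtt m_1-(\operatorname{Im}a)\,\mathtt m_0+p\,\mathtt m_2$. So $\operatorname{span}\{\mathtt m_1,\mathtt m_2,\mathtt w\}$ is not a subalgebra when $\operatorname{Im}a\neq0$. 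Circles and logarithmic spirals in the $\gamma$-plane are exactly those cases. Thus the theorem as stated cannot be proved by either route.

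\noindent\textbf{What survives.} Your strategy does work, and more directly than the paper's, when some element of $G$ stabilizing a ruling acts on it hyperbolically (the paper's case $x_1\neq0$). On each ruling, $\alpha=k\,e^{\tau/r}$ with $\tau$ a horospherical coordinate centred at $z'$. The hyperbolic element shifts $\tau$ by a nonzero constant, so invariance of $\alpha$ (even up to sign) forces $k=0$. Then $\alpha\equiv0$, $\beta\equiv1/r$ and $q=0$ make all structure functions constant, and your rigidity/Berndt--Tamaru ending gives Lohnherr's hypersurface.
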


\begin{corstar}
Let $\Mh$ be a real-analytic ruled hypersurface of cohomogeneity one in $\cptwo$ or $\chtwo$ satisfying the transversality condition.
Then $\Mh$ is congruent to 
one of the ruled hypersurfaces constructed below in \S\ref{cp2Examples} and \S\ref{ch2Examples}.  
Conversely, every example occurring in those constructions is 
a cohomogenity-one ruled hypersurface.   
\end {corstar}

 \section{Subspaces of $\C^3$} \label{subspaces}

For the ruled hypersurface construction we will describe in the next section, 
we need to choose an appropriate (complex) two-dimensional subspace $\scV \subset \C^{n+1}$ that will determine the space $M_0$ referred to in the introduction.  
If $\<\ , \  \>$ is definite on $\C^{n+1},$ its restriction to $\scV$ will, of course, be definite; however, when $\<\ , \  \>$ is indefinite, the restriction can be either definite or indefinite.   
(We exclude from our constructions the case where the restriction is degenerate.)

To simplify the exposition, we consider the case $n=2$.  We will also use the parameter
$\epsilon=\pm 1$, with $\epsilon=1$ when $\<\ , \  \>$ is positive definite and
$\epsilon=-1$ when $\<\ , \  \>$ is indefinite with signature $(-,-,+,+,+,+)$.
Let $\X' = \{\bz\in \C^3 | \<\bz, \bz\> = \epsilon r^2\}$ and
$\X = \pi \X'$ where $\pi$ denotes projectivization.  Note that $\X$ is $\cptwo$ or $\chtwo$ depending on the sign of $\epsilon$.

If $\epsilon=1$, then $\X = \cptwo = \pi \X' =\pi S^5(r) = \pi \C^3$. 
As a point set, $\pi \scV$ is a complex projective line $\cpone$ lying in $\cptwo$.
(Here, and in what follows, we abuse notation slightly by not explicitly noting that the origin is excluded from the domain of $\pi$.)   Its geometry is that of a totally geodesic submanifold with positive constant sectional curvature $4c$.  
There exists an orthonormal basis (over $\R$) for $\scV$ of the form $\{\e_0, \ri \e_0, \e_1, \ri \e_1\}$.  
(Note that multiplication by $\ri $ acts as an isometry in $\C^3$.) 
We can complete this to an orthonormal basis of $\C^3 =\R^6$ by adding a pair of the form $\{\e_2, \ri \e_2\}$.  

If $\epsilon=-1$, then $\X = \chtwo = \pi H^5_1(r) = \pi \X'$.  (Recall that $H^5_1(r)$ is the anti-de Sitter space defined by $\<\bz,\bz\>=-r^2$; see \cite{nrsurvey}, p.236.)   
We introduce a second parameter $\teps = \pm 1$ to indicate whether $\scV$ contains a timelike vector or not.  Specifically, let $\bv$ be any unit vector orthogonal to $\scV$ and let 
$\teps=\<\bv, \bv\>$.   

If $\teps = 1$ then the restriction of $\<\ , \  \>$ to $\scV$ is indefinite.  The complex projective line $\pi \scV$ is the union of three non-empty subsets, $\pi \scV^-$, $\pi \scV^0$ and $\pi \scV^+$. The first part $\pi \scV^-$ lies in $\chtwo$ as a totally geodesic $\chone$.  As a subset of $\pi \scV$ (which is topologically a 2-sphere $\cpone$), it is a 2-disk whose boundary is $\pi \scV^0$ and whose exterior is $\pi \scV^+$.  The latter may also be thought of as a totally geodesic complex hypersurface of the indefinite complex space form $\cptwo_1$ consisting of all $\pi \bz$ such that  $\<\bz, \bz\> = r^2$ (see \cite{barrosromero}, p.57).  The points of $\pi \scV^0$ are customarily 
referred to as {\em ideal points}, as they lie on the {\em ideal boundary} of 
$\chtwo$ (see Proposition \ref {prop-case2} in \S \ref{classproof}). 

In this case, we let $\e_0$ be a unit timelike vector in $\scV$. Then $\e_0$ and $\ri \e_0$ span a (real) 2-dimensional subspace of $\C^3$ on whose orthogonal complement the restriction 
of $\<\ ,\ \>$ is positive definite.  (This is because the maximal real dimension of any totally isotropic subspace is 2.)    Let $\e_1$ be any spacelike unit vector in $\scV$ orthogonal to $\e_0$ and ${\mathrm i} \e_0$.  Then $\{\e_0, \e_1\}$ is a basis for $\scV$ over $\C$ and   
$\{\e_0,\ri \e_0, \e_1,\ri  \e_1\}$ is a basis for $\scV$ over $\R$.  Finally, we can choose a unit vector $\e_2$ so that $\{\e_0,{\mathrm i}   \e_0, \e_1,{\mathrm i}   \e_1, \e_2, {\mathrm i}  \e_2\}$ constitutes an orthonormal basis for $\R^6$.

If $\teps=-1$ then the restriction of $\<\ , \  \>$ to $\scV$ is definite.  Thus, $\pi\scV^0$ and $\pi\scV^-$ are empty and $\pi\scV=\pi\scV^+$ is contained in $\cptwo_1$.   We choose an orthonormal basis for $\scV$ of the form $\{\e_1, \ri \e_1, \e_2, \ri \e_2\}$, but in this case we
complete it by adding a pair $\{ \e_0, \ri \e_0\}$ of timelike unit vectors orthogonal to $\scV$.

To summarize, we will consider three cases:
\begin{enumerate}
\item  $\epsilon = 1$ (hence $\teps=1$ also) and $\scV$ is spanned over $\C$ by $\{\e_0, \e_1\}$;
\item  $\epsilon = -1$, $\teps = 1$, and $\scV$ is spanned by  $\{\e_0, \e_1\}$;
\item  $\epsilon = -1$, $\teps = -1$, and $\scV$ is spanned by $\{\e_1, \e_2\}$.
\end {enumerate} 
In all cases $\{\e_0, \ri  \e_0, \e_1, \ri \e_1, \e_2, \ri  \e_2\}$ is an orthonormal set, with $\<\e_0, \e_0\> = \epsilon$, $\<\e_1, \e_1\> =  \<\e_2, \e_2\> = 1$.

\section{Ruled hypersurfaces determined by plane curves} \label{construction}

In this section, we set up the mechanism for constructing a simple class of ruled hypersurfaces
in $\chtwo$ and $\cptwo$.  Much of the development can be handled
simultaneously for all cases using the parameters $\epsilon,\teps$ defined in the previous section. 
As above, we let $\scV \subset\C^3$ be a subspace to which $\<\ , \  \>$ restricts to be nondegenerate, and let $\bv$ be a vector orthogonal to $\scV$ with $\< \bv, \bv\>=\teps$.  (In terms of the bases chosen above, either $\bv= \e_0$ or $\bv=\e_2$.)

Let $\bdelta: I \mapsto \scV$ be a curve with $\<\bdelta(s), \bdelta(s) \> = \epsilon \teps r^2$ for all $s$ in some real interval $I$. 
Through each point of the curve we take the unique complex 1-dimensional subspace that is orthogonal to $\scV$; the union of these is a 3-dimensional submanifold $\Sigma$ of $\C^3$.  The 5-dimensional complex cone over $\Sigma$ intersects $\X'$ in a hypersurface $M'$ whose projection $\pi M'$ in $\X$ will be our desired hypersurface $M$.

We begin by parameterizing $M'$; specifically, let
\begin{equation}\label{basic-z}
 \bz = f(\theta, s, w) = e^{i \theta} \s (\bdelta(s) + w r \bv)
\end{equation}
where $\theta$ and $s$ are real parameters, $w$ is complex and $\s$ is a positive function chosen so that $\bz$ will lie in $\X' = \{\bz|\<\bz,\bz\>=\epsilon r^2\}$.
This will require
\begin{equation}\label{def-sigma}
\teps \s^2 (1 + \epsilon |w|^2) = 1,
\end{equation}
which determines $\sigma$ as a function of $w$.  It is useful to note that this relation implies 
$$\epsilon \s^2 |w|^2 =-(\s^2 -\teps).$$   

Next, we will construct an orthogonal basis for the tangent space to $M'$ by computing
derivatives of $\bz$.
Expressing $w$ in terms of its real and imaginary parts by $ w =w_1 + \ri w_2 $, we let
\begin{equation}\label{Rformula}
\bR= w_1 \frac{\partial \bz}{\partial w_1} + w_2 \frac{\partial \bz}{\partial w_2} = \teps(\s^2-\teps)\bz + e^{i \theta}r \s w \bv.
\end{equation}
We then compute $\<\bR, \bR\> = -\epsilon\s^2(\s^2-\teps)r^2 = \s^4 |w|^2 r^2$ and note that
 $\<\bR, \bz\> = \<\bR,{\mathrm i}  \bz\> =0.$ 
Furthermore, 
$\bz_\theta = {\mathrm i} \bz$ and $\bz_s =  e^{{\mathrm i} \theta} \s \bdelta_s$,
so that $\<\bz_s, {\mathrm i} \bz\> = \sigma^2 \<\bdelta_s, {\mathrm i}  \bdelta\>$.
It is also clear that $\<\bz_s, \bR\> = 0$.
Thus,  $\{\bz, {\mathrm i} \bz, \bR, {\mathrm i}  \bR\}$ is an orthogonal set, and if $w\ne 0$ it consists of nonzero vectors.
We will next check that the remaining derivatives of $\bz$ are in the span of $\{\ri \bz, \bR, \ri  \bR\}$; doing this
will require expressing $w$ in terms of real coordinates adapted to each case.

\subsubsection* {For $\cptwo$ (Case 1, $\epsilon=\teps=1$)}
Since $\s^2(1+|w|^2)=1$, we have $0<\s\le 1$.  
On the open set where $w\ne 0$, we introduce local real coordinates $t,u$ such that $w =e^{\ri t} \tan u$ and $\s=\cos u$, where $0 < u < \pi/2$.  In terms of these coordinates,
$$
\bz_u = \frac{1}{\sin u \cos u} \bR =r \frac{\bR}{|\bR|} $$
and so, using \eqref{Rformula} to express $\bv$ in terms of $\bR$ and $\bz$,
\begin{equation}
\bz_t = e^{i \theta} r \s w{\mathrm i}   \bv = {\mathrm i}  \bR - \teps(\s^2 -\teps){\mathrm i} \bz = {\mathrm i} \bR - (\s^2 -1){\mathrm i} \bz. \label{1zt-equation}
\end{equation}

\subsubsection* {For $\chtwo$ (Case 2, $\epsilon=-1$, $\teps=1$)}
Since $\s^2(1-|w|^2)=1$, we have $1\le \s < \infty $.  
On the open set where $w \ne 0$, introduce local coordinates $t,u$ such that $w = e^{\ri t}\tanh u$ and $\s = \cosh u$, where $u>0$.  In terms of these,
$$
\bz_u = \frac{1}{\sinh u \cosh u} \bR 
= r \frac{\bR}{|\bR|}
$$
and $\bz_t$ is given by the same formula as \eqref{1zt-equation}.

\subsubsection* {For $\chtwo$ (Case 3, $\epsilon=\teps=-1$)}

In this case, our curve $\bdelta$ lies in the span of $\{\e_1, \e_2\}$ and $\bv = \e_0$.   
Then $\scV$ intersects $S^5_2(r) = \{ \bz \in \C^3  | \<\bz, \bz\> = r^2\}$ in a 3-sphere $S^3$ of constant positive curvature $1/r^2$ and $\pi S^3$ inherits the geometry of a $\cpone$ with constant positive curvature $4/r^2 = - 4c$.  
The range of our curve $\bdelta$ lies in $S^3$ and $\pi \circ \bdelta$ will not intersect $\chtwo$.  
However, the analogous construction of perpendicular rulings can still be made, leading to a ruled hypersurface in $\chtwo$.

Since $\s^2(1-|w|^2)=-1$, then $|w|>1$, so the orthogonal set $\{\bz, {\mathrm i} \bz, \bR, {\mathrm i}  \bR\}$ automatically consists of nonzero vectors in this case.
We choose local coordinates $t,u$ so that $w=e^{\ri t} \coth u$ and $\sigma=\sinh u$ where $u>0$.  In terms of these,
$$
\bz_u = -\frac{1}{\sinh u \cosh u} \bR 
= - r \frac{\bR}{|\bR|}
$$
and
\begin{equation}
\bz_t = e^{i \theta} r \s w {\mathrm i}  \bv ={\mathrm i}  \bR - \teps(\s^2 -\teps){\mathrm i}\bz  = {\mathrm i}  \bR + (\s^2 +1){\mathrm i} \bz. \label{3zt-equation}
\end{equation}

\subsection* {For all three cases}
In all cases, $\bz_u = \teps r \dfrac{\bR}{|\bR|}$ and $\bz_t$ is in the span of $\ri \bz$ and $\ri \bR$. 
To complete an orthogonal basis, we will compute the part of $\bz_s$ orthogonal to the span of $\{ \ri \bz, \bR, \ri \bR\}$.  This vector is
\begin{equation}
  \be = \bz_s -\epsilon b (\s^2 {\mathrm i} \bz -\teps {\mathrm i} \bR) \label{edef}
\end{equation}
where $b = \<\bdelta_s, {\mathrm i}  \bdelta\>/r^2$.
When $\epsilon = -1$, the orthogonal set $\{\bz,{\mathrm i}  \bz, \bR, {\mathrm i} \bR\}$ has two timelike and two spacelike vectors, so $\<\be, \be\>$ is nonnegative no matter what the sign of $\epsilon$ is.
Using \eqref{Rformula}, we rewrite the formula for $\be$ as
\begin{equation}\label{secondeformula}
\be = e^{{\mathrm i} \theta } \s (\bdelta_s - \teps\epsilon b {\mathrm i}  \bdelta).
\end{equation}
Letting  $\rho = |\be|$, we compute
\begin{equation}\label{rho=sigma}
\rho^2 = \s^2(\<\bdelta_s, \bdelta_s\> -\teps \epsilon b^2 r^2).
\end{equation}

\begin{prop}
The map $f(\theta, s, w) = e^{i \theta} \s (\bdelta(s) + w r \bv)$ is an
immersion at $(\theta, s, w)$ if and only if $\rho\ne 0$.  This holds even where $w = 0$.
\end{prop}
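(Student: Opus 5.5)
The plan is to compute the real rank of the differential of $f$ directly, in the real coordinates $(\theta,s,w_1,w_2)$, which are valid everywhere, including at $w=0$. The image of $df$ is spanned by the four vectors
\[
\bz_\theta = \ri\bz,\qquad \bz_s,\qquad \bz_{w_1} = e^{\ri\theta}\s_{w_1}(\bdelta+wr\bv)+e^{\ri\theta}\s r\bv,\qquad \bz_{w_2} = e^{\ri\theta}\s_{w_2}(\bdelta+wr\bv)+e^{\ri\theta}\s r\,\ri\bv .
\]
Since $\s$ depends only on $|w|^2$, its partials are multiples of $w_1$ and $w_2$. Hence the terms involving $\s_{w_j}$ are real multiples of $\bz$, and we can write
\[
\bz_{w_1}=\lambda_1\bz+e^{\ri\theta}\s r\bv,\qquad \bz_{w_2}=\lambda_2\bz+e^{\ri\theta}\s r\,\ri\bv .
\]
So $f$ is an immersion exactly when these four vectors are linearly independent over $\R$.

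\textbf{Step 1: modulo $\bz$, the two $w$-derivatives are independent.} To exploit the orthogonal structure, work in the complex span of $\bz$ and $\bv$ together with $\scV$. First I will show that $\bz$, $e^{\ri\theta}\bv$ and $e^{\ri\theta}\ri\bv$ are $\R$-independent of $\ri\bz$. The vector $\bz$ has a nonzero $\scV$-component $e^{\ri\theta}\s\bdelta$, since $\<\bdelta,\bdelta\>=\epsilon\teps r^2\neq 0$. The vectors $e^{\ri\theta}\bv$ and $e^{\ri\theta}\ri\bv$ lie in $\scV^\perp$, which is complex one-dimensional and complementary to $\scV$ (nondegeneracy). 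Hence a real relation $a\,\ri\bz+b\,\bz_{w_1}+c\,\bz_{w_2}=0$, projected to $\scV$, gives $(a\ri+b\lambda_1+c\lambda_2)\bdelta=0$. This forces $a=0$ and $b\lambda_1+c\lambda_2=0$. Projecting to $\scV^\perp$ then gives $\s r(b+c\ri)\bv=0$, so $b=c=0$. Thus $\{\ri\bz,\bz_{w_1},\bz_{w_2}\}$ is always independent, including at $w=0$.

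\textbf{Step 2: the remaining direction is exactly $\be$.} The remaining question is whether $\bz_s=e^{\ri\theta}\s\bdelta_s$ lies in the real span $S$ of those three vectors. Equivalently, we ask whether $\bz_s$ lies in the real span of $\ri\bz$, $\bz$, $e^{\ri\theta}\bv$, $e^{\ri\theta}\ri\bv$ with the $\bz$-coefficient forced appropriately. Since $\bz_s\in e^{\ri\theta}\scV$, projecting to $\scV^\perp$ kills the $\bv$-components unless they cancel against $w\bv$ terms of $\bz$. I will handle this by splitting into cases.

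\emph{Case $w\neq0$.} The paper already shows that $\{\ri\bz,\bR,\ri\bR\}$ is an orthogonal set of nonzero vectors whose span contains $\bz_t$ and $\bz_u$, with $\bz_u$ a multiple of $\bR$. So $S$ equals this span, and $\be$ is by definition the component of $\bz_s$ orthogonal to $S$. One technical point: the paper's orthogonal complement is taken in an indefinite inner product when $\epsilon=-1$. I need that the Gram matrix of $\{\ri\bz,\bR,\ri\bR\}$ is nondegenerate, which holds because the norms are nonzero for $w\ne0$ (computed earlier). Then $\bz_s\in S$ iff $\be=0$.

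\emph{Case $w=0$.} Here $S$ is spanned by $\ri\bz$, $e^{\ri\theta}\bv$, $e^{\ri\theta}\ri\bv$. Formula \eqref{secondeformula} still shows $\be=\bz_s+(\text{real combination of }\ri\bz)$, hence $\be\in\bz_s+S$, and $\be$ is orthogonal to $\ri\bz$ and to $\scV^\perp$. So again $\bz_s\in S$ iff $\be=0$.

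\textbf{Step 3: from $\be=0$ to $\rho=0$.} The final link is that $\be=0$ iff $\rho=0$, and I expect this to be the main obstacle in the indefinite case. The vector $\be=e^{\ri\theta}\s(\bdelta_s-\teps\epsilon b\,\ri\bdelta)$ lies in $e^{\ri\theta}\scV$ and is orthogonal to $\ri\bdelta$ (and to $\bdelta$, since $\<\bdelta_s,\bdelta\>=0$). So it lies in the real orthogonal complement of $\{\bdelta,\ri\bdelta\}$ in $\scV$, which is the complex line $(\C\bdelta)^{\perp}\cap\scV$. On that complex line the form is definite: its sign is the sign of $\<\bdelta,\bdelta\>$ times the signature data, and one checks that it is positive in all three cases, consistent with the paper's remark that $\<\be,\be\>\ge0$. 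Therefore $\<\be,\be\>=0$ forces $\be=0$, and $\rho\ne0$ iff $\be\ne0$ iff $df$ has rank $4$.
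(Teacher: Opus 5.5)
Your proposal is correct and follows essentially the same route as the paper: both reduce the question to whether $\bz_s$ lies in the real span of the other three partials, and both identify the obstruction as the orthogonal residual $\be$ (via the orthogonal set $\{\ri\bz,\bR,\ri\bR\}$ when $w\ne 0$, and via $\be=\bz_s-\teps\epsilon b\,\ri\bz$ at $w=0$). Your Step~3 places $\be$ in the positive-definite complex line $(\C\bdelta)^{\perp}\cap\scV$, so that $\rho=0$ forces $\be=0$; this makes explicit a definiteness point the paper uses only tacitly.
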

\begin{proof}
Assuming that $w \ne 0$, consider any point where $\rho \ne 0$.  Then  $\{\bz, {\mathrm i} \bz, \bR, {\mathrm i} \bR, \be, {\mathrm i} \be\}$
is an orthogonal set of nonzero vectors in $\C^3$.
The set $\{\bz_\theta, \bz_s, \bz_u, \bz_t\}$ is linearly independent since it is
related to $\{{\mathrm i} \bz, \bR, {\mathrm i} \bR, \be\}$ by the nonsingular matrix
$$
\begin{bmatrix}
\quad 1 &\quad \epsilon\s^2 b &\quad 0 &\quad -\teps(\s^2-\teps) \\
\quad 0 &\quad 0 &\quad \teps/\s^2 |w| &\quad 0 \\
                \quad 0 &\quad -\teps\epsilon  b &\quad 0 &\quad 1 \\
                         \quad 0&\quad 1 &\quad 0&\quad 0
\end{bmatrix}.
$$
Thus $f$ must be an immersion at the point in question.
\par
Consider now a point where $\rho = 0$. Using
\eqref{1zt-equation} or \eqref{3zt-equation} and \eqref{edef}, we can express
$\bz_s$ as a linear combination of $\bz_\theta$ and $\bz_t$, so that 
$f$ is not an immersion at that point. 

We now consider a point where $w=0$.  Note that $\teps =1$ and $\bv=\e_2$ in this case.  
Also, $\s=1$ at the point in question so that 
$\bz_\theta = e^{{\mathrm i} \theta} {\mathrm i} \bdelta$, $\bz_s = e^{{\mathrm i} \theta}\bdelta_s$, $\bz_{w_1}= e^{{\mathrm i} \theta}r \e_2$, and $\bz_{w_2}= e^{{\mathrm i} \theta}r {\mathrm i} \e_2$ there.  Thus, $f$ will be an immersion if and only if $\{\bdelta_s, {\mathrm i} \bdelta, \e_2, {\mathrm i} \e_2\}$ is a linearly independent set.  This is equivalent to linear independence of $\{\bdelta_s, {\mathrm i} \bdelta\}$.  Now $\bzeta = \bdelta_s-\teps \epsilon b {\mathrm i} \bdelta$ is orthogonal to $\bdelta$ and ${\mathrm i} \bdelta$ and thus $f$ is an immersion if and only if $\bzeta \ne 0$. 
From \eqref{secondeformula} it is clear that this is equivalent to $\rho \ne 0$.
\end{proof}
\par

We resume our work in an open set where $\rho\ne 0$ and the $(u, t)$ parameterization is valid.  
We will define an orthonormal basis for the tangent space to $M'$ and compute the shape operator of $M'$ as a hypersurface in $\X'$.
This, in turn, will allow us to compute the shape operator of $M$.

We have a hypersurface in $\X'$ with unit normal $\bxi' = {\mathrm i}  \be / \rho$
and tangent space spanned by $\{{\mathrm i} \bz, \be, {\mathrm i} \bR, \bR\}$.  
Let $\bU$ be the unit vector field $\be/\rho = - {\mathrm i}  \bxi'$.  Then $\xi = \pi_* \bxi'$ and $W = \pi_* \bU = - J \xi$ will be the
 unit normal and structure vector fields for the hypersurface $M$ in $\X$.  Further, let
\begin{equation}
\boldE'_1 = \frac{{\mathrm i} \bR}{ |\bR|}, \qquad
\boldE'_2 = \frac{\bR}{|\bR|}.
\end{equation}
(Recall that $|\bR| = \s^2 |w| r$.)
It is easy to check that $(u, t)$ satisfy
\begin{align}
\bz_u &= \teps r \boldE'_2. 
\end{align}

In computing the shape operator $A'$ of $M'$ in $\X'$, we make use of the fact
that $\X'$ is totally umbilic in $\C^3$ so that the partial derivative of $\bxi'$ with
respect to any coordinate coincides with its covariant derivative on
$\X'$.  We begin by computing explicitly that $A'({\mathrm i} \bz) = \bU$, even though this
fact must be true for any $S^1$-invariant hypersurface (see \cite{nrsurvey}, Lemma 1.7).
$$
A'({\mathrm i} \bz) = - \nt_{{\mathrm i} \bz}\bxi' = -\frac{\partial \bxi'}{\partial \theta} = -{\mathrm i}  \bxi' = \bU.
$$
Continuing with our computation of $A'$, we have
\begin{equation}
A'\boldE'_2 = \frac{\teps}{r} A'\bz_u = -  \frac{\teps}{r} \frac{\partial \bxi'}{\partial u} =0
\end{equation}
since
\begin{equation}
\frac{\be}{\rho} =  \frac{\be}{\s}(\<\bdelta_s, \bdelta_s\> - \teps\epsilon b^2 r^2)^{-\frac{1}{2}}
\end{equation}
depends on $\theta$ and $s$ but not on $t$ or $u$.  Also,
\begin{align}
A'\boldE'_1 &= \frac{1}{r\s^2|w|} A' ({\mathrm i} \bR) = \frac{1}{r\s^2|w|}(A'\bz_t + \teps(\s^2-\teps)A'({\mathrm i} \bz))\\
&= \frac{1}{r\s^2|w|}\left(-\frac{\partial}{\partial t}\left(\frac{{\mathrm i} \be}{\rho}\right)\right) -\frac{\teps \epsilon }{r} |w| \ A'({\mathrm i} \bz)\\
&= - \frac{\teps\epsilon}{r} |w| \ \bU.
\end{align}

It remains to compute $A'\bU = \frac{1}{\rho} A' \be = \frac{1}{\rho} \left(A' \bz_s  -\epsilon b(\s^2  A'({\mathrm i} \bz) -\teps A'({\mathrm i} \bR))\right)$.
From the previous steps, it is easy to see that $\s^2 A'({\mathrm i} \bz) -\teps  A'({\mathrm i}  \bR)\ =  \teps \bU$.  Now
$$
A'\bz_s = - \nt_{\bz_s}\bxi'  = -\frac{\partial}{\partial s}\left(\frac{{\mathrm i}  \be}{\rho}\right)
= \frac{\rho_s}{\rho^2}\ {\mathrm i} \be -\frac{1}{\rho}({\mathrm i}  \be)_s.
$$
The first term of this expression reduces to
$$
\frac {\< \bdelta_{ss}, \bdelta_s\> - \teps\epsilon b \<\bdelta_{ss}, {\mathrm i} \bdelta\>}
      {\<\bdelta_s, \bdelta_s\> -\teps\epsilon  b^2 r^2}\ \bxi'.
      $$
\noindent
Differentiating (\ref{edef})
gives
\begin{equation}
\be_s = \bz_{ss} -\epsilon \frac { \<\bdelta_{ss}, {\mathrm i} \bdelta\>}{r^2}\s^2 {\mathrm i} \bz
+ \teps\epsilon  \frac { \<\bdelta_{ss}, {\mathrm i} \bdelta\>}{r^2} {\mathrm i} \bR -\teps\epsilon b {\mathrm i}  \bz_s
\end{equation}
since $\bR_s = \teps(\s^2-\teps)\bz_s$.

On the other hand, if we express $\bz_{ss} = e^{i \theta} \s \bdelta_{ss}$ in terms of the orthogonal frame
 $\{\bz, {\mathrm i} \bz, \be, {\mathrm i} \be, \bR, {\mathrm i}  \bR\}$, we get
 \begin{equation*}
 \begin{split}
r^2 \bz_{ss} =
 \epsilon \s^2 &\<\bdelta_{ss}, \bdelta\> \bz 
+ \epsilon \s^2 \<\bdelta_{ss}, {\mathrm i} \bdelta\> {\mathrm i} \bz\\
 & +r^2\frac {\< \bdelta_{ss}, {\mathrm i} \bdelta_s\> + \teps\epsilon b \<\bdelta_{ss}, \bdelta\>}
      {\<\bdelta_s, \bdelta_s\>\ -\teps\epsilon  b^2 r^2}\ {\mathrm i} \be
      +r^2\frac {\< \bdelta_{ss}, \bdelta_s\> -\teps\epsilon b \<\bdelta_{ss}, {\mathrm i} \bdelta\>}
      {\<\bdelta_s, \bdelta_s\> - \teps\epsilon b^2 r^2}\ \be \\
     & -\teps\epsilon \< \bdelta_{ss}, {\mathrm i} \bdelta\> {\mathrm i} \bR -\teps\epsilon \<\bdelta_{ss}, \bdelta\> \bR
     \end{split}
 \end{equation*}
from which we compute
\begin{equation*}
\be_s = \epsilon \s^2 \frac {\<\bdelta_{ss}, \bdelta\>}{r^2}\ \bz
+ \frac {\< \bdelta_{ss}, {\mathrm i} \bdelta_s\> + \teps\epsilon b \<\bdelta_{ss}, \bdelta\>}{\<\bdelta_s, \bdelta_s\> -\teps\epsilon b^2 r^2} {\mathrm i} \be
- \teps\epsilon b\ {\mathrm i} \bz_s + \frac {\< \bdelta_{ss}, \bdelta_s\> - \teps\epsilon 
b \<\bdelta_{ss}, {\mathrm i} \bdelta\>} {\<\bdelta_s, \bdelta_s\> - \teps\epsilon b^2 r^2}\ \be
 - \teps\epsilon  \frac {\<\bdelta_{ss}, \bdelta\>}{r^2} \bR,\\
\end{equation*}
so that, using $\< \bdelta_{ss}, \bdelta\> = - \<\bdelta_s, \bdelta_s\>$, we have
\begin{equation*}
A'\bz_s =
-\frac{1}{\rho} \left(- \epsilon \s^2 \frac {\<\bdelta_s, \bdelta_s\>}{r^2}\ {\mathrm i}\bz
-\frac {\< \bdelta_{ss},{\mathrm i}  \bdelta_s\> - \teps\epsilon b \<\bdelta_s, \bdelta_s\>}{\<\bdelta_s, \bdelta_s\> - \teps\epsilon b^2 r^2}\ \be + \teps\epsilon \frac {\<\bdelta_s, \bdelta_s\>}{r^2} {\mathrm i} \bR
+\teps\epsilon  b \bz_s\right).
\end{equation*}
\noindent
Thus,
\begin{equation*}
A'\bU = \epsilon \frac{{\mathrm i} \bz}{r^2} +
\frac{1}{\rho} \left(
\frac {\< \bdelta_{ss}, {\mathrm i} \bdelta_s\> 
- \teps\epsilon b \<\bdelta_s, \bdelta_s\>}{\<\bdelta_s, \bdelta_s\> - \teps\epsilon b^2 r^2}\ -2\teps\epsilon b\right) \bU
-\teps\epsilon \frac{|w|}{r} \  \boldE'_1.
\end{equation*}

\par
\me
We have shown that
with respect to the orthonormal basis 
$(\frac{{\mathrm i} \bz}{r}, \bU, \boldE_1', \boldE_2')$,
$$
 A'= \begin{bmatrix}\

0 & \frac{\epsilon}{r} & 0 & 0 \\
\frac{1}{r} & \a &\b & 0 \\
                0 &\b &0 & 0 \\
                          0& 0 &0& 0 \\
\end{bmatrix}
$$
where $ \b =-\teps\epsilon\frac{|w|}{r}  $ and
\begin{equation}\label{alpha-definition}
\alpha = \frac{1}{\rho} \left(
\frac {\< \bdelta_{ss}, {\mathrm i} \bdelta_s\> -\teps\epsilon  b \<\bdelta_s, \bdelta_s\>}{\<\bdelta_s, \bdelta_s\> - \teps\epsilon b^2 r^2}\ -2\teps\epsilon b\right).
\end{equation}

Following the methods of \cite{nrsurvey}, we see that the projection $M$ of $M'$ to
$\X$ is a hypersurface with unit normal
$\xi = \pi_*\bxi'$ and structure vector $W = \pi_*\bU$ as indicated above.  Let $E_1 = \pi_*\boldE_1'$ and $E_2 = \pi_*\boldE_2'$.
With respect to the basis $(W, E_1, E_2)$, the shape operator
of $M$ is
\begin{equation}
 A = \begin{bmatrix}
 \a &\b& 0 \\
                \b &0 & 0 \\
                           0 &0& 0 \\
\end{bmatrix}.
\label{ruled-shape}
\end{equation}

For any hypersurface in which $A$ takes this form,
the characteristic polynomial of $A$ is $-t(t^2 -\a t -\b^2)$,
so that $ \frac{1}{2} (\a + \sqrt{\a^2 + 4 \b^2})$ and
$\frac{1}{2} (\a - \sqrt{\a^2 + 4 \b^2})$, along with 0,
are continuous principal curvature functions. In our constuction, $\pi \circ \bdelta$ is precisely the set of points where $\b=0$. 
This is called the {\it spine} of the hypersurface (see Goldman  \cite{goldman}, Chapter 5, for more about spines).  Note that $\s =1$ on the spine. 
Except for points on the spine, the principal curvature functions in our construction are smooth, as is the frame $\{W, E_1, E_2\}$.

\begin{remark}
It is not difficult to see how the analogous construction can be made in higher dimensions.  The single complex dimension determined by $\bv$ must be replaced by the $\C^{n-1}$ orthogonal to $\scV$.  The value of $\a$ depends only on calculations performed in the complex 2-dimensional space $\scV$ (see \eqref{alpha-definition}) and the value of $\b$ depends only on the distance from the spine as measured by $|w|$.  The shape operator is zero on all the ``extra" directions.  Extending the special cohomogeneity-one examples (see next section) to higher dimensions is straightforward.  We also expect that obtaining the characterization theorems by the method of moving frames (see \S \ref{Moving-Frames}) would be straightforward, at least in the main case where $\beta^2+c$ is nonzero.  
\end{remark}

 \section {Cohomogeneity-one examples} \label{coho-1-Construction}
In this section, we construct the simplest examples -- ruled hypersurfaces based on curves of constant geodesic curvature.

\subsection{The curve $\pi \circ \bdelta$}\label{curve}

We look more closely at the curve over which our hypersurfaces will be built.  Specifically, we compute its curvature in terms of the parameters occurring in the previous section.

First recall the 2-dimensional complex subspace
$\scV$ of $\C^3$ and the unit vector $\bv$ orthogonal to it.
To accommodate the case where the curve lies outside the ambient space of the hypersurface, we define the 3-dimensional submanifold 
$$\X_0' = \{\bz \in \C^3| \<\bz,\bz\> = \teps \epsilon r^2, \<\bz,\bv\> =\<\bz, {\mathrm i} \bv\> =0\} $$
which contains the image of $\bdelta$.
Since we are temporarily working with the case $w=0$, we write
$$ \bz = f_0(\theta, s) = e^{{\mathrm i}  \theta} \bdelta(s) $$ and take
\begin{equation}
  \be = \bz_s - \teps\epsilon b {\mathrm i} \bz.
\end{equation}
As before, 
 $\<\be, \bz\> = \<\be, {\mathrm i} \bz\>  =0$ and $\rho = |\be|$ satisfies
 $$\rho^2 = \<\bdelta_s, \bdelta_s\> -\teps\epsilon b^2 r^2.$$
 Provided that $\rho \ne 0$, $f_0$ defines a surface $M'_0$ in  $\X_0'$ with unit normal $\bxi' = {\mathrm i}  \be / \rho$
and tangent space spanned by $\{ {\mathrm i} \bz, \bU\}$ 
where $\bU= \dfrac{1}{\rho}\be$. 

Now $\X_0 = \pi \X'_0$ is either $\cpone$ or $\chone$ and $ \pi \circ \bdelta$ is a curve in $\X_0$ which we denote by $M_0$.  
Then $\xi = \pi_* \bxi'$ and $W = \pi_* \bU = - J \xi$ will be
 unit normal and tangent vector fields for the curve 
$M_0$ in $\X_0$.

The shape operator of $M_0'$ in $\X_0'$ is just the restriction of $A'$ which we computed previously.  In particular, $A'( {\mathrm i} \bz) = \bU$, 
and $A'\bU = \dfrac{\epsilon} {r^2} {\mathrm i} \bz + \a \bU$
where, recalling \eqref{alpha-definition},
$$
\alpha = \frac{1}{\rho} \left(
\frac {\< \bdelta_{ss}, {\mathrm i} \bdelta_s\> - \teps\epsilon b \<\bdelta_s, \bdelta_s\>}{\<\bdelta_s, \bdelta_s\> -\teps\epsilon b^2 r^2}\ -2\teps\epsilon b\right).
$$
\par
\me
We have shown that
with respect to the orthonormal basis $\{\frac{1}{r} {\mathrm i} \bz, \bU\}$,
$$
 A'= \begin{bmatrix}\
0 & \frac{\epsilon}{r} \\
\frac{1}{r} & \a 
\end{bmatrix}.
$$

Following the methods of \cite{nrsurvey} pp.240-242, we see that the projection $M_0$ of $M_0'$ to
$\X$ is a curve with unit normal
$\xi = \pi_*\bxi'$, and unit tangent vector $W = \pi_*\bU$ as indicated above.  Since $\chone$ (resp. $\cpone$) is totally geodesic in $\chtwo$ (resp. $\cptwo$), the covariant derivative there is obtained by restriction. It is easy to check that
$\nt_W W = \a \xi$ and $\nt_W \xi = -\a W$.  In other words, $M_0$ is a curve in $\X_0$ with geodesic curvature $|\a|$. 

\subsection {Building the Ruled Hypersurface}  \label{build}

Here we give an informal discussion which serves as motivation for our (more formal) construction of hypersurfaces based on curves of constant geodesic curvature.  Note that this discussion does not apply to Case 3 in Section \ref{construction} ( ``circles outside $\chtwo$")  but the formalism for that case is analogous.

\subsubsection* {The idea}
The tangent space to $\X_0$ is $J$-invariant, and so is spanned at each point of $M_0$ by $W$ and $J W = \xi$, the unit normal to $M_0$.  Note that $W$ and $\xi$ are just the restrictions to $M_0$ of the vector fields of the same name occurring on the ruled hypersurface $M$.  In particular, through each point of $M_0$, the leaf of the distribution $W^{\perp}$ (on $M$) is contained in a unique complex projective line through that point.

Thus for any point $\qpt =\pi \bz$ on the curve $\gam = \pi \circ \bdelta$, the leaf of $W^\perp$ through $\qpt$ lies in the (totally geodesic) $\cpone$ (respectively, $\chone$) through $\qpt$ orthogonal to $W$ and $\xi$.  This tells us that the ruled hypersurface $M$ is obtained from the ``plane curve" $\gam$ by passing the unique perpendicular complex projective line through each of its points.

We obtain curves of constant geodesic curvature by choosing actions on $\cpone$ and $\chone$ whose orbits are the desired curves.  These actions extend trivially to $\cptwo$ and $\chtwo$.
Further, we can also find actions that rotate the leaves of $W^\perp$ and hence preserve the hypersurface $M$.  These two types of actions combine to make a 2-parameter group of transformations of $\cptwo$ (resp. $\chtwo$) whose orbits are of codimension one  in $M$ (see \S \ref{actions}).

More specifically, we replace $f(\theta, s, w)$ by 
$\bz=f(\theta, w) = e^{ {\mathrm i} \theta}\s (\bdelta(s) + w r \bv)$, in our general setup (holding $s$ fixed). The image of $f$ will be a 3-dimensional submanifold of the sphere $S^5(r)$ (resp. anti-de Sitter space $H^5_1$) and $\pi \circ f$ gives a 2-dimensional submanifold $L_0$ which is a punctured $\cpone$ (one point removed) in $\cptwo$ or a $\chone$ in $\chtwo$).  Applying the group action to $L_0$ traces out the hypersurface $M$.  The rulings of $M$ are $L_0$ and its images under the group action. 

One needs to verify that the submanifolds in question are well-defined.  When $w\ne 0$, the $(u,t)$ coordinate system may be used.  As before, we have
 $$
\bz_u = \teps r \frac{\bR}{|\bR|}
$$
and
\begin{equation}
\bz_t ={\mathrm i}  \bR - \teps(\s^2 -\teps){\mathrm i} \bz.
\end{equation}
It is easy to check that $\{\bz_\theta = {\mathrm i}  \bz, \bz_u, \bz_t\}$ is a linearly independent set so that $f$ is an immersion.  On the other hand, points where $w=0$ only arise in the case $\teps=1$.  Then $\bz_\theta = {\mathrm i} \bz$, $\bz_{w_1}= e^{ {\mathrm i} \theta}r \e_2$, and $\bz_{w_2}= e^{{\mathrm i} \theta}r {\mathrm i} \e_2$.  Clearly, these three vectors are linearly independent at such a point since $\bz$ lies in the span of $\{\e_0, \e_1\}$.
Thus, $f$ is an immersion at all points. 

In the next section, we will construct the examples explicitly.

\begin{remark}
The curves of constant geodesic curvature in $S^2=\cpone$ and $\rhtwo=\chone$
are orthogonal trajectories of pencils of lines occurring in projective and hyperbolic geometry.  Specifically, we observe the following:

\vspace{0.5cm}
In the real hyperbolic plane, there are three distinct kinds of pencils of lines: (see \cite{ryan})

1. pencils of intersecting lines (all lines through a given point $C$);

2. pencils of ultraparallels (all lines perpendicular to a given line $\ell$);

3. pencil of parallels (all lines directed toward a given ``ideal point" $\omega$).
\vspace{0.5cm}

\noindent
Correspondingly, there are three kinds of families of curves in which we are interested:

1. all circles centered at a point $C$;

2. a line $\ell$ and all its equidistant curves;

3. all horocycles ``passing through" an ideal point $\omega$.

\vspace{1.0cm}

In spherical geometry, pencils of the first two types coincide and there are no parallel pencils.  Thus, in our study of ruled hypersurfaces in $\cptwo$, the three cases collapse to one.  We will use the ``concentric circles" descriptions for these pencils.

\end{remark} 

\subsection{ Our examples in $\cptwo$}\label{cp2Examples} 
\bi\me\par

Note that the $\cpone$s that are the lines of $\cptwo$ will have constant curvature $4c = 4/r^2$.  Thus they are isometric to ordinary 2-spheres of radius $\frac{r}{2}$.  
Let
$$
 Y = \begin{bmatrix}
 {\mathrm i} & 0 \\
              0 &-{\mathrm i} & \\
\end{bmatrix}
$$
so that
$$\exp tY = \begin{bmatrix}
 e^{{\mathrm i}t} &\quad 0 \\
             0 &\quad e^{-{\mathrm i}t}  \\
\end{bmatrix}.
$$
These matrices represent transformations with respect to the basis $\{\e_0, \e_1\}$ (since $\e_2$ plays no role in this part of our construction).  
Let $\l$ be a nonzero real constant to be determined later.
Then $s \mapsto \exp (s\l Y)$ determines a 1-parameter
group of isometries of $\cpone$.  We get a 1-parameter group of isometries of $\cptwo$ by extending it to act as the identity on $\e_2$.
Let $a$ be a real number satisfying $0<a<\frac{\pi}{2}$ and take
$$
\bp = r (\cos\ a\ \e_0 + \sin\ a\ \e_1) \in S^3(r).
$$
Then
\begin{align}
{\mathrm i}\bp &= r{\mathrm i} (\cos\ a\ \e_0 + \sin\ a\ \e_1)\\
Y\bp &= r{\mathrm i} (\cos\ a\ \e_0 - \sin\ a\ \e_1).
\end{align}

Let $\bdelta(s) = \exp (s\l Y) \bp$ so that $\bdelta_s = \l (\exp (s\l Y))Y \bp$
and $\bdelta_{ss} = -\l^2 \bdelta.$  Then $\<\bdelta_s, \bdelta_s\> = \l^2r^2$,
$b = \l \cos 2a$, and $\<\bdelta_s, \bdelta_s\> - b^2 r^2 = r^2\l^2\sin^2 2a.$

We choose $\l = \frac{1}{r}\csc 2a$  so that $\rho = \s$ in \eqref{rho=sigma}.  We now apply the general construction of \S 
\ref{construction} to obtain a ruled hypersurface $M$ in $\cptwo$.   
Substituting in \eqref{alpha-definition}, we get
\begin{equation}
\a = -\frac{2}{r \s} \cot 2a
\end{equation}
and
\begin{equation}
\b = -\frac {|w|}{r}
\end{equation}
for this hypersurface.

\bi The spine of $M$ is a circle of radius $r a$ centered at $C=\pi \e_0$.  Each value of $a$ gives a different hypersurface.
The corresponding family of hypersurfaces
foliates the ``punctured'' $\cptwo$ which would result from the removal
of the $\cpone$ through $C$ and $\pi \e_2$.  For a point $\pi \bp$ on the spine, the extended ruling is the $\cpone$ perpendicular at $\pi \bp$ to the $\cpone$ through $C$ and $\pi \e_1$.
Thus, each ruling is a punctured $\cpone$.  As $|w|$ tends to $\infty$, $\pi \bz$ approaches but does not reach the point $\pi \e_2$ through which the corresponding the extended ruling passes.   The hypersurface $M$ is thus diffeomorphic to $S^1 \times \R^2$.  
Unlike the examples we will provide for $\chtwo$, the hypersurface $M$ is not complete.

Note that the curve $\gam$ is a curve of constant geodesic curvature $|\a|$, so that varying $a$ produces a family of concentric circles on the 2-sphere 
$\cpone$.  When $a=\frac{\pi}{4}$ this circle is a geodesic of the 2-sphere.  In this way, the circles are analogous to a pencil of  ``equidistant curves" as occur on $\chtwo$, where the geodesic plays the role of the base line (see next section)

As we have seen, the continuous principal curvature functions are $0$,  $\frac{1}{2} (\a + \sqrt{\a^2 + 4 \b^2})$ and
 $\frac{1}{2} (\a - \sqrt{\a^2 + 4 \b^2})$. 
In our case, 
$$
\a^2 + 4 \b^2 = \frac{4}{r^2} (|w|^2 + (1+|w|^2)\cot^2 2a)) .
$$
If $a=\frac{\pi}{4}$, then $\a$ vanishes identically and the principal curvatures are $0$ and $\pm \b$.  
The latter are not smooth at points where $\b=0$, namely on the spine $\gam$. 
For other values of $a$, $\a^2 + 4\b^2$ is nonvanishing and the principal curvature functions are smooth as is the frame $\{W, E_1, E_2\}$. 

Observe also that if we set $a = \frac{\pi}{2}$ in the above construction, we find that $\<\bdelta_s, \bdelta_s\> - b^2 r^2 =0$
so that $\rho = 0$ and the map $f$ is not an immersion.  As $a$ approaches $\frac{\pi}{2}$,  $|\a|$ tends to $\infty$.

\bi

\subsection{ Our examples in $\chtwo$} \label{ch2Examples}
 \bi\me\par
\subsubsection* {Hypersurfaces generated by equidistant curves}

Let
$$
 Y = \begin{bmatrix}
 0 & 1 \\
              1 &0& \\
\end{bmatrix}.
$$
so that
$$\exp tY = \begin{bmatrix}
  \cosh\ t &\quad \sinh\ t \\
             \sinh\ t &\quad \cosh\ t  \\
\end{bmatrix}
$$
Here, again, the matrices represent transformations with respect to the basis $\{\e_0, \e_1\}$. 
Let $\l$ be a nonzero real constant to be determined later.
Then $s \mapsto \exp (s\l Y)$ is a 1-parameter subgroup of $GL(2,\C)$
preserving $\<\ ,\ \>$ and thus induces a 1-parameter group of isometries
of $\chone$.  Note that $GL(2,\C)$ (respectively $\chone$)
lies in $GL(3,\C)$ (respectively $\chtwo$) in a canonical way, so that our group
may be regarded as consisting of isometries of $\chtwo$ that preserve $\chone$.

Let $a$ be an arbitrary real number and take
$$
\bp = r (\cosh\ a\ \e_0 + \sinh\ a\ {\mathrm i} \e_1) \in H^5_1(r).
$$
Then
\begin{align}
{\mathrm i}\bp &= r(\cosh\ a\ {\mathrm i} \e_0 - \sinh\ a\ \e_1)\\
Y\bp &= r(\sinh\ a\ {\mathrm i} \e_0 + \cosh\ a\ \e_1).
\end{align}

Let $\bdelta(s) = (\exp (s\l Y))\bp$, so that $\bdelta_s  = \l (\exp (s\l Y))Y \bp$
and $\bdelta_{ss} = \l^2 \bdelta.$  A routine calculation gives $\<\bdelta_s, \bdelta_s\> = \l^2r^2$,
 $b = -\l \sinh 2a$, and $\<\bdelta_s, \bdelta_s\> + b^2 r^2 = r^2\l^2\cosh^2 2a.$

We choose $\l = \frac{1}{r}\sech\ 2a$ so that $b= -\frac{1}{r} \tanh\ 2a$.
This gives $\<\bdelta_s, \bdelta_s\> + b^2 r^2 =1$ and hence $\rho = \sigma$ by \eqref{rho=sigma}.  
Because of \eqref{def-sigma} we have $\rho = \s =1$
when $w=0$ which will require $\bdelta$ to project to a unit speed curve $\g = \pi \circ \bdelta$ in $\chone$. 
This is because $\bdelta_s+b {\mathrm i} \bdelta$ is the component of $\bdelta_s$ orthogonal to the span of $\{\bdelta, {\mathrm i}\bdelta\}$ and thus its length is that of $\pi_*\bdelta_s$.

Finally, we compute $\alpha$
and find that
\begin{equation}
\a = -\frac{2}{r \s} \tanh 2a
\end{equation}
and
\begin{equation}
\b = \frac {|w|}{r}.
\end{equation}

When $a=0$, $\gam$ is the geodesic through $\pi \e_0$ in the direction of $\pi_* \e_1$ and $M$ is a
{\it bisector} as discussed by Goldman \cite {goldman}, p.153 (see also \cite{crbook}, p.446). Although there are many real
hyperbolic planes through $\g$, only one of them
is preserved by the complex structure (i.e., it is a complex hyperbolic line $\chone$).   For any pair of points $({\mathrm P}, {\mathrm Q})$ in this $\chone$ that are
symmetrically placed with respect to $\g$, the set of all
points in $\chtwo$ that are equidistant from ${\mathrm P}$ and ${\mathrm Q}$ is precisely $M$,
hence the name ``bisector".  Note that for a bisector, $\a$ is
identically zero.  The three principal curvatures are $0$, $\frac {1}{r}|w|$ and
$-\frac {1}{r} |w|$.  Thus bisectors are minimal hypersurfaces.  All three principal
curvatures are distinct, except on the spine where $A$
is identically zero.  Finally, we note that it is impossible
to define the principal curvature functions so as to be smooth in
any neighborhood of a point on the spine of a bisector.

For other values of $a$, $\g$ is the equidistant
curve at signed distance $ra$ from the geodesic described above.  The family of ruled hypersurfaces (as $a$ varies through all real
numbers) foliates $\chtwo$.
\par\me

When $a \ne 0$, our formulas for the principal curvatures extend to the spine and the principal curvature functions are smooth on all of $M$.  For points not on the spine, the three principal curvatures are distinct. On the spine, they are $\a$ (of multiplicity 1) and 0 (of multiplicity $2$).

We note that for the hypersurfaces defined in this section, all extended rulings pass through $\pi \e_2$.  Since this point is outside $\chtwo$ it does not interfere with the completeness of the hypersurface as happens in the $\cptwo$ case.

\bi
\subsubsection* {Hypersurfaces generated by circles}
\bi
For this case, we use the general formulas with $\teps = 1$ and $\epsilon = -1$.
Let
$$
 Y = \begin{bmatrix}
 {\mathrm i} & 0 \\
              0 &-{\mathrm i} & \\
\end{bmatrix}
$$
so that
$$\exp tY = \begin{bmatrix}
 e^{{\mathrm i}t} &\quad 0 \\
             0 &\quad e^{-{\mathrm i}t}  \\
\end{bmatrix}.
$$
Again, let $\l$ be a nonzero real constant to be determined later.
Then $s \mapsto \exp (s\l Y)$ gives a 1-parameter
group of isometries, as before.
Let $a$ be any positive number and take
$$
\bp = r (\cosh\ a\ \e_0 + \sinh\ a\ \e_1) \in H^5_1(r).
$$
Then
\begin{align}
{\mathrm i}\bp &= r{\mathrm i} (\cosh\ a\ \e_0 + \sinh\ a\ \e_1)\\
Y\bp &= r{\mathrm i} (\cosh\ a\  \e_0 - \sinh\ a\ \e_1)
\end{align}

Let $\bdelta(s) = (\exp (s\l Y))\bp$ so that $\bdelta_s  = \l (\exp (s\l Y))Y \bp$
and $\bdelta_{ss} = -\l^2 \bdelta.$  By a similar calculation, we get $\<\bdelta_s, \bdelta_s\> = -\l^2r^2$,
 $b = - \l \cosh\ 2a$, and $\<\bdelta_s, \bdelta_s\> + b^2 r^2 = r^2\l^2\sinh^2\ 2a.$

This time we let $\l = \frac{1}{r}\csch\ 2a$.
Note that this is possible only because $a \ne 0$.
Finally,
\begin{equation}
\a = -\frac{2}{r \s} \coth\ 2a
\end{equation}
and
\begin{equation}
\b = \frac {|w|}{r}.
\end{equation}

\bi The spine of the hypersurface is a circle of radius $ra$ centered at ${\mathrm C} =\pi \e_0$.
As $a$ runs through the positive reals, the corresponding family of hypersurfaces
foliates the ``punctured" $\chtwo$ which would result from the removal
of the complex projective line passing through ${\mathrm C}$ and $\pi \e_2$.
As before, each $M$ has three distinct smooth principal curvatures
except on the spine where two principal curvatures are zero and
one is $\a\ne 0$.  In this case, $\a$ cannot vanish, so that the principal curvature
functions are smooth everywhere.

As $a$ approaches $\infty$, $\a$ approaches $-\frac{2}{r\s} $ from below.
Thus $\a$ approaches the value for hypersurfaces based on the
horocycle which are explained in the next section.
\vspace{0.5cm}

As in the previous section, all extended rulings pass through $\pi \e_2$.  In fact, the complex projective line $\ell$ in which the curve $\pi\circ\bdelta$ lies passes through $\pi \e_0$ and $\pi \e_1$.  For $\bp = r (\cosh\ a\ \e_0 + \sinh\ a\ \e_1) \in H^5_1(r)$, let $\bq = r(\sinh\ a\ \e_0 + \cosh\ a\ )\e_1$.  The tangent space to $H^5_1(r)$ at $\bp$ is spanned by the orthonormal set
$\{{\mathrm i}\bp, \bq, {\mathrm i}\bq, \e_2, {\mathrm i}\e_2\}$.  Thus, the tangent space to $\chtwo$ at $\pi \bp$ is spanned by $\{ \pi_*\bq,\pi_*{\mathrm i}\bq, \pi_*\e_2, \pi_*{\mathrm i}\e_2\}$.  This splits into orthogonal subspaces  spanned by $\{ \pi_*\bq,\pi_*{\mathrm i}\bq\}$ and $\{ \pi_*\e_2, \pi_*{\mathrm i}\e_2\}$, respectively -- one tangent to the $\cpone$ in which $\pi\circ\bdelta$ lies, the other tangent to the $\cpone$ containing the ruling $\ell_\ppt ^\perp$ through $\ppt=\pi \bp$.

\bi \bi
\subsubsection* {Hypersurfaces generated by horocycles}

Again, we use the general formulas with $\teps = 1$ and $\epsilon = -1$.  Let
$$
 Y = \begin{bmatrix}
 {\mathrm i} & -{\mathrm i} \\
              {\mathrm i} &-{\mathrm i} & \\
\end{bmatrix}
$$
so that $Y^2 = 0$ and
$$\exp tY = \begin{bmatrix}
 1+t{\mathrm i} &\quad -t{\mathrm i} \\
             t {\mathrm i} &\quad 1-t{\mathrm i} & \\
\end{bmatrix}.
$$
 Let $a$ be an arbitrary real number and take
$$
\bp = r (\cosh\ a\ \e_0 + \sinh\ a\ \e_1) \in H^5_1(r).
$$
Then
\begin{align}
{\mathrm i}\bp &= r {\mathrm i} (\cosh\ a\ \e_0 + \sinh\ a\ \e_1)\\
Y\bp &= r{\mathrm i} (\cosh\ a\  - \sinh\ a) (\e_0 + \e_1).
\end{align}

Let $\bdelta(s) = (\exp (s\l Y))\bp$ so that $\bdelta_s  = \l (\exp (s\l Y))Y \bp$
and $\bdelta_{ss} = \l^2(\exp (s\l Y)) Y^2\bp =0.$  
Since $|\e_0+\e_1| =0$, we get $\<\bdelta_s, \bdelta_s\> = 0$.  Furthermore, a straightforward calculation gives 
$b = - \l$ and 
$\<\bdelta_s, \bdelta_s\> + b^2 r^2 = \l^2 r^2.$
 Choose $\l = 1/r$.  Then we have
\begin{equation}
\a = - \frac{2}{r \s}
\end{equation}
and
\begin{equation}
\b = - \frac {|w|}{r}.
\end{equation}

\bi
The spine $\g$ of $M$ is a horocycle since it has constant geodesic curvature $2/r$.  Since $\a$ is nonzero, the principal curvature functions are smooth.  On the spine, two principal curvatures are zero and the third is equal to $\a$.  At all other points, the principal curvatures are distinct.

As $a$ varies, we get a family of horocycles, just as a point determines a family of circles and a line (geodesic) determines a family of equidistant curves.  The hypersurfaces whose spines belong to this family of horocycles will also foliate $\chtwo$.

Finally we note that all the extended rulings of the hypersurface $M$ pass through $\pi \e_2$.

\subsubsection* { Hypersurfaces generated by circles outside of $\chtwo$}

In this case, we have $\teps = -1$ and $\epsilon= -1$.  Our hypersurface will be built over circles 
on the $\cptwo$ which does not intersect the $\chtwo$ in which the hypersurface will lie.  
Although the computation formally resembles that for Case 1, the columns of the $2 \times 2$ matrices correspond to 
$\e_1$ and $\e_2$ rather than to
$\e_0$ and $\e_1$.

Let
$$
 Y = \begin{bmatrix}
  0 & {\mathrm i} \\
              {\mathrm i} & 0 \\
\end{bmatrix}
$$
so that
$$\exp tY = \begin{bmatrix}
 \cos t & {\mathrm i} \sin t \\
             {\mathrm i} \sin t & \cos t \\
\end{bmatrix}
$$
and $Y^2 = -I.$
Let $\l$ be a nonzero constant to be determined later.
Then $s \mapsto \exp (s\l Y)$ gives a 1-parameter group of transformations preserving $\< \ , \>$. We extend by having the transformations leave $\e_0$ fixed.
Let $a$ be a real number satisfying $-\frac{\pi}{4}<a<\frac{\pi}{4}$ and take
$$
\bp = r (\cos\ a\ \e_1 + \sin\ a\ \e_2) \in S^3(r).
$$
Then
\begin{align}
{\mathrm i}\bp &= r{\mathrm i} (\cos\ a\ \e_1 + \sin\ a\ \e_2)\\
Y\bp &= r{\mathrm i} (\sin\ a\ \e_1 + \cos\ a\ \e_2).
\end{align}

Let $\bdelta(s) = \exp (s\l Y) \bp$ so that $\bdelta_s  = \l (\exp (s\l Y))Y \bp$
and $\bdelta_{ss} = -\l^2 \bdelta.$  By a similar calculation, we get $\<\bdelta_s, \bdelta_s\> = \l^2r^2$,
$b = \l \sin 2a$, and 
$\<\bdelta_s, \bdelta_s\> - b^2 r^2 = r^2\l^2\cos^2 2a.$
We choose $\l = \frac{1}{r}\sec 2a$.
Thus,
\begin{equation}
\a = -\frac{2}{r \s} \tan 2a
\end{equation}
and
\begin{equation}
\b = -\frac {|w|}{r}.
\end{equation}

Note that the curve $\gam$ is a curve of constant geodesic curvature $|\a|$ so that varying $a$ produces a family of concentric circles centered at $C=\pi (\e_1 +\e_2)$ on the 
$\cpone$.  
When $a=0$ this circle is a geodesic of the $\cpone$.  As $a$ varies, $|\a|$ increases and tends to $\infty$ as $a$ approaches $\pm \frac{\pi}{4}$ and the radius approaches zero. 

As we have seen, the continuous principal curvature functions are $0$,  $\frac{1}{2} (\a + \sqrt{\a^2 + 4 \b^2})$ and
 $\frac{1}{2} (\a - \sqrt{\a^2 + 4 \b^2})$. 
In our case, 
$$
\a^2 + 4 \b^2 = \frac{4}{r^2} (|w|^2 + (|w|^2 - 1)\tan^2 2a)
$$
which is always positive on the hypersurface since $|w|^2 > 1$.  Hence the principal curvatures are smooth functions and $M$ is a ruled hypersurface with three distinct principal curvatures.  In the special case where $a=0$, $M$ is minimal with principal curvatures $0$ and $\pm \b$. 

By an argument similar to that used in the preceding cases, we see that all extended rulings pass through $\pi \e_0 \in \chtwo$.  
This point cannot, however, be on the hypersurface as one can see from \eqref{basic-z}.
The hypersurfaces constructed in this subsection foliate the punctured $\chtwo$ obtained by removing the $\cpone$ that passes through $C$ and $\pi \e_0$. 

\bi

 \subsection{Group Actions with codimension-one orbits in $\Mh$}\label{actions}

In this section, we show that the hypersurfaces constructed in \S \ref{cp2Examples} and \S \ref{ch2Examples} are of cohomogeneity one.  Specifically, if
$L$ is the complex projective line in which the curve $\gam$ lies, there is a one-dimensional group of transformations of $L$ that preserves $\gam$ (rotations, translations, or parallel displacements, depending on the curvature of $\gam$.)
On the other hand, 
for each point $\ppt=\pi \bp$ of $\gam$, there is a group of rotations about $\ppt$ that preserves the ruling through $\ppt$.  In fact, as we shall see, the same 1-parameter group of rotations preserves each ruling.

Lifting each of these groups of transformations to $\X'$, we get a 2-dimensional group $G'$ of isometries of $\X'$ that preserves $M'$ and whose orbits are cylinders or tori, codimension one in $M'$). 

Specifically, for the $\cptwo$ case, the first 1-dimensional group comes from the following orthogonal transformation of 
$\R^6$,
\begin{equation}
\begin{bmatrix}
\cos u & - \sin u & 0 & 0 &0 &0 \\
\sin u & \cos u & 0 &0 &0 &0\\
 0 &0 &\cos u &\sin u &0 &0                \\
0    & 0 &-\sin u &\cos u &0 &0     \\
 0 &0 &0 &0 &1  &0\\
  0 &0 &0 &0 &0 &1
\end{bmatrix},
\end{equation}
and the second from
\begin{equation}
\begin{bmatrix}
1 &0 &0 &0 &0 &0                \\
0 &1  &0 &0  &0 &0     \\
 0 &0 &1 &0 &0  &0\\
  0 &0 &0 &1 &0 &0 \\
0 &0 &0 &0 &\cos v &-\sin v \\
 0 &0 &0 &0 &\sin v &\cos v.
 \end{bmatrix}.
\end{equation}
where $u, v \in \R$.
These are matrix forms of transformations $\Omega_u$ and $\Theta_v$ with respect to the basis $\{\e_0, {\mathrm i}\e_0, \e_1, {\mathrm i}\e_1, \e_2, {\mathrm i}\e_2\}$.  More simply, 
\begin{equation}
\Omega_u \e_0 = e^{{\mathrm i}u}\e_0,\quad \Omega_u \e_1 = e^{-{\mathrm i}u}\e_1,\quad \Omega_u \e_2 = \e_2,
\end{equation}
and
\begin{equation}
\Theta_v \e_0 = \e_0,\quad \Theta_v \e_1 = \e_1,\quad \Theta_v \e_2 = e^{{\mathrm i}v}\e_2.
\end{equation}

Note that $\Omega_u$ and $\Theta_v$ commute.  Also $\Omega_u\circ \Omega_{\tilde u}  = \Omega_{u+{\tilde u}}$ and $\Theta_v\circ \Theta_{\tilde v}  = \Theta_{v+{\tilde v}}$.  We let $G$ be the group of isometries of $\X$ generated by the projections of all the $\Omega_u$ and $\Theta_v$.  

For $\chtwo$, the situation is analogous.  For the three kinds of  curves in $\chone$, we have
\begin{enumerate}
\item
equidistants: 
\begin{equation}
\Omega_u \e_0 = \cosh u\ \e_0 + \sinh u\ \e_1,\quad \Omega_u \e_1 = \sinh u\ \e_0 + \cosh u\ \e_1,\quad \Omega_u \e_2 = \e_2;
\end{equation}
\item
circles:
\begin{equation}
\Omega_u \e_0 = e^{{\mathrm i}u}\e_0,\quad \Omega_u \e_1 = e^{-{\mathrm i}u}\e_1,\quad \Omega_u \e_2 = \e_2;
\end{equation}
\item
horocycles:
\begin{equation}
\Omega_u \e_0 = (1+u{\mathrm i})\ \e_0 + u{\mathrm i}\ \e_1,\quad \Omega_u \e_1 = -u{\mathrm i}\ \e_0 + (1-u{\mathrm i})\ \e_1,\quad \Omega_u \e_2 = \e_2.
\end{equation}
\end{enumerate}
In all three cases, we still have
\begin{equation}
\Theta_v \e_0 = \e_0,\quad \Theta_v \e_1 = \e_1,\quad \Theta_v \e_2 = e^{{\mathrm i}v}\e_2.
\end{equation}

For $\chtwo$ when $\gam$ is a circle outside $\chtwo$, we have
\begin{equation}
\Omega_u \e_0 = \e_0,\quad \Omega_u \e_1 = \cos u\ \e_1 + {\mathrm i} \sin u\ \e_2,\quad \Omega_u \e_2 = {\mathrm i} \sin u\ \e_1 + \cos u\ \e_2
\end{equation}
and
\begin{equation}
\Theta_v \e_0 = e^{iv} \e_0,\quad  \Theta_v \e_1 = \e_1,\quad \Theta_v \e_2 = \e_2.
\end{equation}

All these transformations satisfy the same multiplication rules as are satisfied in the $\cptwo$ case.  Furthermore, 
in the context of the general calculation for ruled hypersurfaces over curves in $\cpone$ and $\chone$ in \S\ref{construction}, 
if we let $\bdelta(s) = \exp(s\l Y)\bp$, so that $\pi \circ\bdelta$ is the curve of constant geodesic curvature used in our constructions in \S \ref{cp2Examples} and \S \ref{ch2Examples}, then 
\begin{equation}
(\Omega_u \circ \Theta_v) f(\theta, s, w)= f(\theta, s+\frac{u}{\lambda}, e^{iv} w).
\end{equation}
Therefore, $G'$ preserves $M'$.
Let $G$ be the corresponding group of isometries of $\cptwo$ (resp. $\chtwo$) and also of $M$.  Then for almost all $\qpt \in M$, the isotropy group $G_\qpt$ is finite so the orbits are two-dimensional. Thus, $M$ will be a hypersurface of cohomogeneity 
one.

\subsubsection* {The transversality condition}

We now show that the examples constructed in \S \ref{cp2Examples} and \S \ref{ch2Examples} satisfy the transversality condition.  Specifically, for any point $\ppt$ in $M$ there is an orbit through $\ppt$ that is not tangent to the ruling through $\ppt$.

It is enough to look at the $\Omega_u$ as a 1-parameter subgroup through $u=0.$   Consider an arbitrary point $\ppt=\pi \bz$ of $M$ where $\bz$ satisfies \eqref{basic-z}, i.e.
$$
\bz = f(\theta, s, w) = e^{i \theta} \s (\bdelta(s) + w r \bv).
$$
Then, as seen above,

\begin{equation}
\Omega_u \bz =f(\theta, s+\frac{u}{\lambda}, w)= e^{i \theta} \s (\bdelta(s+\frac{u}{\lambda}) + w r \bv).
\end{equation}
Differentiating this expression and setting $u=0$, we find the initial tangent vector to the orbit $\Omega_u\cdot\bz$ to be
$\frac{1}{\l} \ e^{i \theta} \s  \bdelta_s.$

Now using \eqref{secondeformula}, we get
\begin{equation}
e^{i \theta} \s \bdelta_s = \be + e^{i \theta} \s \teps\epsilon b {\mathrm i} \bdelta.
\end{equation}
It is easy to check that $e^{i \theta}{\mathrm i} \bdelta$ is orthogonal to $\be$.  Recall that $\{{\mathrm i} \bz, \be, {\mathrm i} \bR, \bR\}$ is an orthogonal spanning set for the tangent space to $M'$ at $\bz$.  Applying $\pi_*$ to these vectors yields $0$ and positive multiples of $W$, 
$E_1$ and $E_2$ respectively.  Thus $\pi_*(e^{i \theta} \s \teps\epsilon b {\mathrm i} \bdelta)$ must lie in the span of $E_1$ and $E_2$. As a result,
the initial tangent vector to the orbit of the projection of $\Omega_u$ includes a nonzero multiple of $\pi_* (\be + e^{i \theta} \s \teps\epsilon b {\mathrm i} \bdelta)$ and therefore cannot be orthogonal to $W$.  We must conclude that $\Omega_u$ acts tranversely on $M$ at $\ppt$.  We state this as a proposition.

\begin{prop} \label{transverse-prop}
Let $M$ be one of the ruled cohomogeneity-one hypersurfaces constructed in \ref{cp2Examples} and \ref{ch2Examples} and let $G$ be the group of isometries of $\X$ generated by the $\Omega_u \circ \Theta_v$ as described above.  Then $G$ acts transversely on $M$, i.e. for each $\ppt \in M$ the tangent space at $\ppt$ to the orbit 
$G\cdot \ppt$ does not lie in the tangent space at $\ppt$ to the ruling through that point. 
\end{prop}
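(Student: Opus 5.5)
It suffices to work with the one-parameter subgroup $\Omega_u$, since the tangent space to $G\cdot\ppt$ contains the initial velocity of $u\mapsto \Omega_u\cdot\ppt$. The ruling through $\ppt$ is the leaf of $W^\perp$ through $\ppt$: it is the image of $L_0$ (fixed $\theta$-class and $s$, varying $w$), and its tangent space at $\ppt$ is spanned by $E_1,E_2$, equivalently $\pi_*\bR,\pi_*\ri\bR$. So it suffices to show that the $\Omega_u$-orbit velocity at $\ppt$ has a nonzero $W$-component. Working upstairs in $\X'$ and projecting is natural, because $\pi_*$ kills $\ri\bz$ and sends $\be,\bR,\ri\bR$ to positive multiples of $W,E_2,E_1$ respectively.

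First, use the equivariance identity $(\Omega_u\circ\Theta_v)f(\theta,s,w)=f(\theta,s+u/\l,e^{\ri v}w)$, checked case by case from the definitions of $\Omega_u,\Theta_v$ and $\bdelta(s)=\exp(s\l Y)\bp$. It gives the lifted velocity $\frac{1}{\l}\bz_s=\frac1\l e^{\ri\theta}\s\bdelta_s$. Second, decompose this vector using \eqref{edef}:
$$\bz_s=\be+\epsilon b\,(\s^2\ri\bz-\teps\ri\bR).$$
Third, project. We have $\pi_*(\ri\bz)=0$ and $\pi_*(\ri\bR)\in\operatorname{span}\{E_1\}$, so
$$\pi_*\bz_s=\rho\,W+(\text{component in } \operatorname{span}\{E_1,E_2\}).$$
Since $\rho\neq0$ wherever $f$ is an immersion, by the Proposition on immersions, and $\l\neq0$, the orbit velocity has $W$-component $\rho/\l\neq0$. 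Hence it is not tangent to the ruling, and transversality holds.

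The main obstacle is points on the spine ($w=0$), where the $(u,t)$ coordinates and the vectors $\bR,E_1,E_2$ degenerate. There I would argue directly. At $w=0$ we have $\s=1$, and the ruling's tangent space is $\pi_*\operatorname{span}\{e^{\ri\theta}\bv,e^{\ri\theta}\ri\bv\}$. The orbit velocity is $\pi_*$ of $e^{\ri\theta}\bdelta_s\in\scV$, and it is nonzero modulo $\ri\bz$ precisely because $\bzeta=\bdelta_s-\teps\epsilon b\ri\bdelta\neq0$, i.e.\ $\rho\neq0$. Since $\scV\perp\bv$, this velocity is not in the ruling's tangent space.

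One should also confirm for each example that $\rho\neq0$ everywhere. The choices of $\l$ give $\rho=\s>0$, which settles this uniformly.
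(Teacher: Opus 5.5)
Your proposal is correct and follows essentially the same route as the paper. You use the equivariance identity to get the orbit velocity $\frac{1}{\l}\bz_s$ and decompose $\bz_s$ as $\be$ plus a combination of $\ri\bz$ and $\ri\bR$ (you use \eqref{edef}, the paper the equivalent \eqref{secondeformula}); from $\pi_*\be=\rho W$ with $\rho=\s\neq 0$, the velocity has a nonzero $W$-component and so is not tangent to the ruling. Your separate check at the spine $w=0$ is a worthwhile addition: there $\bR=0$ and the frame $E_1,E_2$ used in the paper's write-up degenerates, while one simply has $\pi_*\bz_s=\rho W$.
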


 \section{Proofs of Theorems 1 and 2}\label{Moving-Frames}
\newcommand\ve{{\mathsf e}}
\newcommand\vv{{\mathsf v}}
\newcommand\vW{W} \newcommand\bE{{\mathbf E}}
\newcommand\bQ{{\mathbf Q}}
\newcommand\Zh{{\widehat Z}} \newcommand\Rhat{{\widehat R}} \newcommand{\fm}{\mathit{m}}
\newcommand\ellhat{{\widehat \ell}} \renewcommand\:{\mspace{2mu}}
\newcommand{\duals}{\varpi}

\newcommand\bigG{\EuScript G}
\newcommand\calG{\mathcal G}
\newcommand\Mho{\mathring{\Mh}}
\renewcommand\qpt{{\mathrm q}}
\newcommand\primept{\ppt'}
\newcommand\zpt{{\mathrm z}}  \newcommand\zprimept{\zpt'}
\newcommand\Z{{\mathscr Z}} \renewcommand\aa{\hat\alpha}  \newcommand\bb{\hat\beta} \newcommand\nablat{{\widetilde \nabla}} \newcommand\sg{\phi} \newcommand\sigmah{{\underline \sg}} \newcommand{\spanc}{\operatorname{span}_\C}
\newcommand{\spanr}{\operatorname{span}_\R}

\subsection{Moving Frames and Exterior Differential Systems}\label{movingdes}
We will prove Theorem \ref{main-theorem} using the techniques of moving frames
and exterior differential systems\footnote{Briefly, an {\bf exterior differential system} on a manifold
is an ideal $\I$ in the algebra of smooth differential forms on the manifold, such that $\I$ also closed under exterior differentiation.
Submanifolds to which all the differential forms in $\I$ pull back to be zero are {\bf integral submanifolds} of the ideal.
When an independence condition is present, represented locally by a decomposable $n$-form $\Omega$ which is well-defined up to 
adding $n$-forms in $\I$, attention is usually confined to {\bf admissible} integral submanifolds, i.e., those $n$-dimensional submanifolds to which $\Omega$ restricts to be a volume form.} which provide us with a systematic way of determining the satisfiability of the multiple conditions we are imposing on our hypersurfaces.  Background material in this subject may be found in the
textbook \cite{CfB2}.

An orthonormal frame $(\ve_1, \ve_2, \ve_3, \ve_4)$
at a point in $\Ma$ is defined to be {\em unitary} if $\JJ \ve_1 = \ve_2$ and $\JJ \ve_3 = \ve_4$.
We let $\F$ be the bundle of unitary frames on $\Ma$.  As a sub-bundle of the orthonormal frame bundle, $\F$ carries canonical 1-forms $\w^a$ and connection forms $\w^a_b$ for $1\le a,b \le 4$.  
These 1-forms are characterized by the properties that, if $\s$ is any local section of $\F$, and $(\ve_1, \ve_2, \ve_3, \ve_4)$ is the 
corresponding local unitary frame field on $\Ma$, then the $\s^*\w^a$ comprise the dual coframe field
and the Levi-Civita connection $\nablat$ on $\Ma$ satisfies
$$\langle \ve_a, \nablat_\bv \ve_b \rangle = \vv \intprod \s^* \w^a_b$$
where $\vv$ is any tangent vector on $\Ma$.  The connection forms satisfy $\w^a_b = -\w^b_a$ but also, because
the complex structure $\JJ$ is parallel with respect to $\nablat$, we have
$$\w^3_1 = \w^4_2, \quad \w^3_2 = -\w^4_1.$$
The 1-forms $\w^1, \ldots, \w^4, \w^2_1, \w^4_1, \w^4_2, \w^4_3$ are pointwise linearly independent  and give
a globally defined coframe on $\F$.  Moreover, they satisfy structure equations 
\begin{equation}\label{dcanon}
d\w^a = \w^b \wedge \w^a_b
\end{equation}
and
$
d\w^a_b + \w^a_k \wedge \w^k_b = \Phi^a_b,
$
where 
\begin{align*}
\Phi^1_2 &= c(4\w^1 \wedge \w^2 +2\w^3 \wedge \w^4)\\
\Phi^3_4 &= c(2\w^1 \wedge \w^2 +4\w^3 \wedge \w^4)\\
\Phi^2_4 &= c(\w^1 \wedge \w^3 +\w^2 \wedge \w^4) =\Phi^1_3\\
\Phi^1_4 &= c(\w^1 \wedge \w^4 - \w^2 \wedge \w^3) =-\Phi^2_3.
\end{align*}

Let $\Mh$ be a hypersurface in $\Ma$.  Near any point of $\Mh$ there is an open set $\setU$ and a local section $\sg:\setU \to \F$ 
defining a unitary frame field on $\setU$ such that  $\ve_3 = \vW$, the structure vector.  Since $\ve_4$ would then be normal to the hypersurface,
$\sg^*\w^4=0$ and by differentiation the connection forms $\w^4_i$ satisfy
$\sg^*\w^4_i = A_{ij} \w^j$, where $1 \le i,j \le 3$ and $A_{ij}$ are the components of the shape operator of $\Mh$ (see \cite{crbook}, p.445).

Suppose now that $\Mh$ is ruled.  As noted in \S\ref{background}, the functions  $\alpha = \langle AW, W\rangle$ and $\beta = |AW - \alpha W|$ are globally-defined functions on $\Mh$, and near points where $\beta >0$ there is a unique unitary frame field such that 
$\ve_3=W$ and 
\begin{equation}\label{Aadapt}
A \vW = A \ve_3 = \alpha \ve_3 + \beta \ve_1.
\end{equation}
Moreover, since it is impossible for $\beta$ to vanish on an open subset of $\Mh$, then $\beta$ is positive on an open dense subset
$\Mho \subset \Mh$.  On $\Mho$, functions $\alpha$ and $\beta$ are smooth, as is the unitary frame field satisfying \eqref{Aadapt}.
It follows that
\begin{equation}\label{Aofees}
A \ve_1=\beta \ve_3, \quad A \ve_2 = 0,
\end{equation}
on $\Mho$, and the pullbacks of the connection forms satisfy
\begin{equation}\label{Apullback}
\begin{aligned}
\sg^*\w^4_1 &= \beta\: \sg^* \w^3 \\
\sg^*\w^4_2 &= 0\\
\sg^*\w^4_3 &= \beta\: \sg^* \w^1 + \alpha\: \sg^* \w^3.
\end{aligned}
\end{equation}

Our approach to this classification (and proving Theorem \ref{main-theorem}) will be based on associating $\sg$ with 
an integral submanifold of an exterior differential system (EDS). This submanifold is obtained by combining $\sg$ with the 
the components of the shape operator.  
More precisely, we adjoin to $\sg$ the values of the
functions $\alpha, \beta$, obtaining a mapping $\sigmah : \Mho \to \F \times \R^2$ defined
by 
\[
\sigmah: \mpt \mapsto (\mpt, \ve_1\restr_{\mpt}, \ldots, \ve_4\restr_{\mpt}, \alpha(\mpt), \beta(\mpt)).
\]
We will take $(\aa,\bb)$ as coordinates on second factor in $\F \times \R^2$ (so that $\sigmah^* \aa =\aa\circ \sigmah = \alpha$ and $\sigmah^* \bb =\bb\circ \sigmah= \beta$) and let $\W \subset \R^2$ be the half-plane where $\bb>0$.
Then the image of $\sigmah$ lies in $\F \times \W$, and it follows from $\sg^*\w^4=0$ and \eqref{Apullback} that this image is an integral submanifold of the Pfaffian EDS
\begin{equation}\label{defofI}
\I = \{ \w^4,  \w^4_1  - \bb\:\w^3, \w^4_2, \w^4_3 - \bb\:\w^1 - \aa\:\w^3 \}_{\text{diff}}
\end{equation}
on $\F \times \W$.
On the other hand, given a 3-dimensional integral submanifold $S$
satisfying the independence condition, the image of $S$ under the 
basepoint projection of $\F$ is a smooth hypersurface $\Mho$ in 
$\Ma$, and it follows from the vanishing of the 1-forms of $\I$
that the shape operator of $\Mho$ satisfies \eqref{Aadapt} and is
thus a ruled hypersurface.  More formally, we have:
\begin{prop}\label{prop-eds} Given a ruled hypersurface $\Mh$, let $\Mho \subset \Mh$ be the open dense subset where $\beta >0$, and let $
(\ve_1, \ldots \ve_4)$ be the unique unitary frame on $\Mho$ satisfying \eqref{Aadapt} and \eqref{Aofees}.  Then $\sigmah(\Mho)$ is an integral 
submanifold $S$ of $\I$ satisfying the independence condition $\w^1 \wedge \w^2 \wedge \w^3 \ne 0$.  
Conversely, given any such submanifold $S$ there is a ruled hypersurface $\Mho$ equipped with a unitary frame field satisfying \eqref{Aadapt}, \eqref{Aofees}, such that $\sigmah(\Mho) = S$.
\end{prop}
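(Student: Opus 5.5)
The forward direction needs three checks on $\Mho$: that $\sigmah$ is smooth and lands in $\F\times\W$, that it is an integral submanifold of $\I$, and that the independence condition holds. Smoothness of $\alpha,\beta$ and of the adapted frame on $\Mho$ follows from $\beta>0$. There $U=(AW-\alpha W)/\beta$ is smooth, and $\ve_1=U$, $\ve_2=\JJ\ve_1$, $\ve_3=W$, $\ve_4=\JJ W$ form a smooth unitary frame. Note that $\ve_2$ is tangent: $\langle \JJ U, \xi\rangle=\pm\langle U, W\rangle=0$, since $\xi=\JJ W$. Because $\bb>0$ along the image, $\sigmah(\Mho)$ lies in $\F\times\W$.

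For the integrality, $\sg^*\w^4=0$ holds because $\ve_4$ is normal, and \eqref{Apullback} gives the vanishing of the other three generators after pulling back by $\sigmah$. Exterior derivatives of forms that vanish on a submanifold also vanish there, so the whole differential ideal vanishes. Independence holds because $\sg^*\w^1,\sg^*\w^2,\sg^*\w^3$ is the dual coframe of $\ve_1,\ve_2,\ve_3$ on $\Mho$. Finally, $\sigmah$ is an embedding onto its image, since composing it with the basepoint projection gives the identity of $\Mho$.

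For the converse, let $S$ be a 3-dimensional integral submanifold with $\w^1\wedge\w^2\wedge\w^3\ne0$, and let $p:\F\times\W\to\Ma$ be the basepoint projection. We have $d p(v)=\sum_{a}\w^a(v)\ve_a$. On $S$ the form $\w^4$ vanishes and $\w^1,\w^2,\w^3$ are independent, so $p|_S$ is an immersion. Its image tangent space at each point is $\spanr\{\ve_1,\ve_2,\ve_3\}$, with unit normal $\ve_4$. Shrinking $S$ if necessary, which is allowed since the statement is local, $p|_S$ is an embedding onto a hypersurface $\Mho$. We transport the frame and the functions $\aa,\bb$ to $\Mho$ via $(p|_S)^{-1}$.

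The structure vector is then $-\JJ\ve_4=\ve_3$, because $\JJ\ve_3=\ve_4$. The shape operator components are $A_{ij}=\w^4_i(\ve_j)$, and the vanishing of the generators of $\I$ gives $A\ve_3=\aa\ve_3+\bb\ve_1$, $A\ve_1=\bb\ve_3$ and $A\ve_2=0$. These are exactly \eqref{Aadapt} and \eqref{Aofees}, and symmetry of $A$ is consistent with them. Hence $A W^\perp\subseteq\spanr W$, so $\Mho$ is ruled, with $\alpha=\aa$ and $\beta=\bb>0$.

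The only subtle point is identifying $\sigmah(\Mho)$ with $S$. The frame on $\Mho$ satisfying \eqref{Aadapt} is unique when $\beta>0$: $\ve_1$ is determined by $AW-\alpha W=\beta\ve_1$, $\ve_3=W$, and $\ve_2,\ve_4$ follow from unitarity. It therefore coincides with the transported frame. The values of $\alpha$ and $\beta$ also coincide with $\aa,\bb$. So $\sigmah(\Mho)=S$.
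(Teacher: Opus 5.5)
Your proposal is correct and is essentially the paper's own argument: the paragraph preceding the proposition pulls back via $\sg^*\w^4=0$ and \eqref{Apullback}, and conversely projects $S$ to $\Ma$ and reads off the shape operator from the vanishing of the generators of $\I$. You add the details the paper leaves implicit, namely the frame construction, why $p|_S$ is an immersion, and the uniqueness argument giving $\sigmah(\Mho)=S$. One small improvement: rather than shrinking $S$, regard $\Mho$ as the immersed hypersurface $p|_S$, as the paper implicitly does, so that $\sigmah(\Mho)=S$ holds for the given $S$ itself.
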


\subsection{Proof of classification results}\label{classproof}
\begin{lemma}\label{lemma-pis} 
Let $\Mh$ and $\Mho$ be as in Proposition \ref{prop-eds}
such that $\Mh$ has cohomogeneity one.
Let $\sigmah: \Mho \to \F \times \W$ be the integral submanifold given by Proposition \ref{prop-eds}, and on $\F \times \W$ define 1-forms
\begin{equation}\notag
\label{pidefs}
\begin{aligned}
\pi_1 &:=d\bb - (\bb^2 + c) \w^2,\\
\pi_2 &:= \bb \w^2_1 - (\bb^2-c) \w^1, \\
\pi_3 &:= d\aa - \aa\bb\: \w^2,
\end{aligned}
\end{equation}
where $c=\epsilon/r^2$.  Then 
$\sigmah^* \pi_1 = 0$, 
$\sigmah^* (\w^3 \wedge \pi_2) = 0$
and $\sigmah^* (\w^2 \wedge \pi_2+ \w^3 \wedge \pi_3) = 0.$
\end{lemma}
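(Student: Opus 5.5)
The plan is to get the last two identities from the structure equations alone (Codazzi equations of the EDS $\I$), and to get $\sigmah^*\pi_1=0$ from the cohomogeneity-one hypothesis. Throughout, I pull everything back by $\sigmah$ and write $\w^a,\w^a_b,\alpha,\beta$ for the pullbacks. On $\Mho$ we have $\w^4=0$, $\w^4_2=0$, $\w^4_1=\beta\w^3$ and $\w^4_3=\beta\w^1+\alpha\w^3$. By the unitary symmetries, $\w^3_1=\w^4_2=0$ and $\w^3_2=-\w^4_1=-\beta\w^3$. The remaining unknown connection forms are $\w^2_1$ and $\w^4_3$, and the latter is already determined.

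\emph{Step 1: exterior derivatives of the generators of $\I$.} I apply $d\w^a_b+\w^a_k\wedge\w^k_b=\Phi^a_b$ and $d\w^a=\w^b\wedge\w^a_b$ to the three nontrivial Pfaffian generators and reduce modulo $\I$, keeping the expressions for $\Phi^a_b$ from \S\ref{movingdes}.
\begin{itemize}
\item From $d(\w^4_2)=0$, using $\w^4_1=\beta\w^3$ and $\w^4_3=\beta\w^1+\alpha\w^3$, I expect a 2-form relation involving $\beta\w^2_1$, $(\beta^2-c)\w^1$ and $\w^3$. I expect this to collapse to $\w^3\wedge\pi_2=0$.
\item From $d(\w^4_1-\beta\w^3)=0$ and $d(\w^4_3-\beta\w^1-\alpha\w^3)=0$, with $d\w^1,d\w^3$ expanded by \eqref{dcanon}, I expect the two Codazzi relations $\w^3\wedge\pi_1\equiv(\ldots)$ and $\w^1\wedge\pi_1+\w^2\wedge\pi_2+\w^3\wedge\pi_3=0$.
\end{itemize}
Concretely, I expect the first of these to show $d\beta\equiv(\beta^2+c)\w^2$ modulo $\w^1,\w^3$, with the $\w^1$-coefficient possibly already forced to vanish.

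\emph{Step 2: cohomogeneity one.} If $g$ is an ambient isometry preserving $\Mh$, then $g$ maps $W$ to $\pm W$ and $A$ to $\pm A$. Hence $g$ preserves $\alpha$ up to sign and preserves $\beta$, and it carries the adapted frame of \eqref{Aadapt} to the adapted frame. So $\beta$, $\alpha$ and all coefficients of $\w^2_1$ in the coframe $\w^1,\w^2,\w^3$ are constant on orbits. On the dense set where orbits are two-dimensional, every such invariant function $h$ has $dh$ proportional to a single 1-form $\nu$ annihilating the orbit tangent planes.

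Write $\pi_1=p\,\w^1+q\,\w^3$ from Step 1. Then $d\beta=(\beta^2+c)\w^2+p\w^1+q\w^3$. Here I would use the standing assumption that $\beta^2+c\not\equiv 0$, together with real-analyticity to pass from a dense open set to all of $\Mho$. With this, $d\beta\neq0$ generically, so $\nu\sim d\beta$. The orbit planes are then $\ker d\beta$.

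\emph{Step 3 (main obstacle): forcing $p=q=0$.} I would take $d$ of $d\beta$ and substitute $d\w^2=\w^1\wedge\w^2_1+\w^3\wedge\w^2_3$, with $\w^2_1$ constrained by $\w^3\wedge\pi_2=0$. Together with $dp\wedge d\beta=dq\wedge d\beta=0$ and $d\alpha\wedge d\beta=0$, this gives algebraic conditions on $p,q$ and on the $\w^3$-coefficient of $\w^2_1$. I expect these to force $p=q=0$, perhaps after a case split on whether $\alpha$ is constant.

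This step is where I am least sure of the mechanism. If the algebra does not close, I would try the other route: the two-dimensional orbit plane must be spanned by $G$-invariant combinations of $\ve_1,\ve_3$ shifted by $\ve_2$. I would then use that the ruling direction $\ve_2$ cannot lie in $\ker d\beta$, since $d\beta(\ve_2)=\beta^2+c\ne0$ generically. Once $\pi_1=0$ is established, the Step 1 relation reduces to $\w^2\wedge\pi_2+\w^3\wedge\pi_3=0$, which completes the lemma.
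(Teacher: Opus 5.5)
There is a genuine gap. Your Step~1 is the right computation, and your final reduction (from $\sigmah^*\pi_1=0$ to the last two identities) is correct. But the proof of $\sigmah^*\pi_1=0$, which is the only place the cohomogeneity hypothesis enters, is never actually supplied. Step~3 is left as a computation you hope will close, and you give no argument that differentiating $d\beta$ forces the unknown coefficients to vanish.

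The route you sketch also imports hypotheses the lemma does not have. Neither real-analyticity nor $\beta^2+c\not\equiv 0$ is assumed here. Moreover, the lemma's argument is reused in Proposition~\ref{prop-case2} on open sets where $\beta^2+c=0$, so a proof that needs $d\beta\neq 0$ via $\beta^2+c\neq 0$ would not serve the paper. The fallback using $d\beta(\ve_2)=\beta^2+c$ only concerns the $\w^2$-coefficient of $d\beta$, so it says nothing about the coefficients you need to kill.

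The missing idea is to finish Step~1 with the Cartan lemma instead of leaving a $(\ldots)$. In fact $d\theta_1\equiv \w^3\wedge\pi_1$ exactly modulo the Pfaffian generators, so $\pi_1=p\,\w^3$ along $S$; there is no $\w^1$-term, contrary to your ansatz $\pi_1=p\,\w^1+q\,\w^3$. Likewise $\pi_2=q\,\w^3$. Substituting these into $\w^1\wedge\pi_1+\w^2\wedge\pi_2+\w^3\wedge\pi_3=0$ gives $\pi_3=p\,\w^1+q\,\w^2+s\,\w^3$ with the \emph{same} $p$. Thus, along $S$,
\[
d\alpha=p\,\w^1+(\alpha\beta+q)\,\w^2+s\,\w^3,\qquad d\beta=(\beta^2+c)\,\w^2+p\,\w^3 .
\]
The first part of your Step~2 correctly yields $d\alpha\wedge d\beta=0$: both differentials annihilate the tangent planes of the two-dimensional orbits, which fill a dense set. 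The $\w^1\wedge\w^3$-component of $d\alpha\wedge d\beta$ is exactly $p^2$. Hence $p=0$, i.e.\ $\sigmah^*\pi_1=0$, by pure linear algebra and with no condition on $\beta^2+c$. The remaining two identities then follow immediately, as you say.
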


\begin{proof}
By Proposition \ref{prop-eds}, the image of $\sigmah$ is an admissible integral submanifold of the Pfaffian system $\I$ defined by \eqref {defofI}.  For the sake of convenience, we label the 1-forms of that system as
\begin{equation}\notag
\theta_0 := \w^4, \quad \theta_1 := \w^4_1 - \bb \w^3,\quad \theta_2 := \w^4_2,\quad 
\theta_3 :=\w^4_3 - \bb\w^1 - \aa \w^3.
\end{equation}
To determine further conditions satisfied by admissible integral submanifolds, we will need a complete set of algebraic generators
for $\I$.  These are given by $\theta_0, \ldots,\theta_3$ and their exterior derivatives, which we now compute.

First note that, from \eqref{dcanon},
\begin{equation}\notag
d\theta_0 = \w^1 \wedge \theta_1 +\w^2 \wedge \theta_2 +\w^3 \wedge \theta_3,
\end{equation}
so that $d\theta_0$ is already in the algebraic ideal generated by the 1-forms (and hence we have no need to include 
$d\theta_0 $ among the algebraic generators of $\I$); we express this more succinctly by writing
\begin{equation}\notag
d\theta_0 \equiv 0\quad \text{mod}\quad \theta_0, \ldots, \theta_3.
\end{equation}
We compute the other generator 2-forms as 
\begin{equation}\label{twoforms}
\begin{alignedat}{2}
d\theta_1 &\equiv && \w^3\& \pi_1,\\
d\theta_2 &\equiv && \w^3\&\pi_2,\\
d\theta_3 &\equiv \w^1 \& \pi_1 +\w^2\&\pi_2 \,+\,&&\w^3\&\pi_3,
\end{alignedat}
\end{equation}
modulo $\theta_0, \ldots, \theta_3$.

Since $S = \sigmah(\Mho)$ is an admissible integral submanifold of $\I$, each of the 2-forms on the right in \eqref{twoforms} must vanish along\footnote{In what follows, when we speaking of a differential form vanishing or an equation involving differential forms
holding `along $S$', we will mean that the forms satisfy that condition when pulled back to $S$ under the inclusion map.}
 $S$. By applying the Cartan Lemma (see Lemma A.1.9 in \cite{CfB2}) we see that there are functions $p, q, s$ on $S$ such that 
\begin{equation}\label{pivals}
\pi_1 = p\:\w^3, \quad \pi_2 = q\:\w^3, \quad \pi_3 = p\:\w^1 + q\:\w^2 + s\:\w^3
\end{equation}
along $S$.  By substituting in the definitions of $\pi_1,\pi_2,\pi_3$ it follows that 
\begin{subequations} \label {abval}
\begin{alignat}{3}
d\aa &= p\:\w^1 &+ (\aa\bb+q)&\w^2 &+s\: &\w^3,\label{daval} \\
d\bb &= &(\bb^2+c)&\w^2 &+p\:&\w^3, \label{dbval} \\
\bb\:\w^2_1 &= (\bb^2-c)\w^1 &&&+q\:&\w^3
\end{alignat}
\end{subequations}
along $S$.  

By the cohomogeneity assumption, there is a dense subset of $\Mh$ which is a union of two-dimensional orbits of a
subgroup $G$ of the ambient isometry group.   Since the shape operator and structure vector are preserved by such isometries, $\alpha$ and $\beta$ are constant along these orbits, and hence the differentials of $\alpha$ and $\beta$ are linearly dependent at each point of $\Mh$.  
Correspondingly, $d\aa$ and $d\bb$ must be linearly dependent along $S$.  Imposing this condition on the right-hand sides in \eqref{daval} and \eqref{dbval} shows that $p=0$ at each point of $S$.  Hence $\pi_1=0$ along $S$, and the vanishing along $S$ of the remaining 2-form
generators of $\I$ in \eqref{twoforms} gives the conclusion.
\end{proof}

\begin{prop}\label{prop-case1} Let $\Mh$, $\Mho$, $\pi_2$, $\pi_3$ be as in Lemma \ref{lemma-pis}.  Let $G$ be the group of isometries preserving $\Mh$, with the union of the two-dimensional orbits of $G$ being dense in $M$.  Assume that the action of $G$ satisfies the transversality condition.  Suppose $\setU \subset \Mho$ is a connected open set on which $\beta^2 + c \ne 0$ (again, this is vacuous if $c>0$).  Then
\begin{enumerate}[label={\rm (\arabic*)},ref={(\arabic*)},itemsep=0pt]
\item\label{pivanish1} $\sigmah^* \pi_2 = 0$ and $\sigmah^* \pi_3 = 0$ on $\setU$;
\item $\alpha^2/(\beta^2+c)$ is constant on $\setU$;
\item there is point $\zpt \in \CP^2$ such that 
each ruling in $\setU$ when extended
passes through $\zpt$;
\item the point $\zpt$ is fixed by isometries in $G$.
\end{enumerate}
In the case where $c<0$ and $\beta^2+c >0$, $\zpt$ lies in $\chtwo$; in the case where $c<0$ and $\beta^2+c <0$, $\zpt$ lies in 
$\cptwo_1$.
\end{prop}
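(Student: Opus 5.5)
Starting from Lemma \ref{lemma-pis}, along $S=\sigmah(\setU)$ we have $\pi_1=0$, $\pi_2 = q\,\w^3$ and $\pi_3 = q\,\w^2 + s\,\w^3$, so that
\[
d\aa = (\aa\bb+q)\w^2 + s\,\w^3,\qquad d\bb = (\bb^2+c)\w^2 .
\]
The first step is to prove $q=s=0$, which is part \ref{pivanish1}. Linear dependence of $d\aa$ and $d\bb$ is already used up in getting $p=0$, so transversality must enter here. The orbits of $G$ are two-dimensional and tangent to $\ker d\beta$ when $\beta^2+c\neq0$, that is, to $\ker \w^2$. I will read transversality as saying that the orbit tangent plane is not contained in the ruling, which is spanned by $\ve_1,\ve_2$, i.e.\ is $\ker\w^3$. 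Hence $\ker\w^2\neq\ker\w^3$ along the orbit directions, and the orbit plane $\ker\w^2$ contains a vector with $\w^3\neq0$. Since $\alpha$ is also constant on orbits, $d\aa$ must be a multiple of $\w^2$, which forces $s=0$.

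To get $q=0$, I will use the fact that $G$ preserves the adapted frame wherever $\beta>0$. This holds because $\ve_3=W$ and $\ve_1=(AW-\alpha W)/\beta$ are invariantly defined, and $\ve_2,\ve_4$ follow from $J$ and the normal. Consequently every invariant function, in particular $q$, is constant on orbits, and the derivatives of $q$ have no $\w^1$ or $\w^3$ components. I will then differentiate the identity $\bb\,\w^2_1 = (\bb^2-c)\w^1 + q\,\w^3$ using the structure equations for $d\w^1$, $d\w^3$, $d\w^2_1$ and the curvature forms $\Phi$. I expect the $\w^1\wedge\w^3$ or $\w^2\wedge\w^3$ coefficient to give an algebraic relation of the form $q(\bb^2+c)\cdot(\text{nonzero}) = 0$, or else $q\,\aa = 0$ together with a further relation. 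Checking this computation, and making sure the result really forces $q=0$ rather than some degenerate branch, is the main obstacle. If the direct computation does not close, my fallback is to use $d(d\aa)=0$ with $s=0$, which gives $d(\aa\bb+q)\wedge\w^2 + (\aa\bb+q)\,d\w^2=0$, and to compare it with $d\w^2$ computed from the structure equations.

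Once $\pi_2=\pi_3=0$, part (2) is a direct computation: $d\alpha = \alpha\beta\,\w^2$ and $d\beta=(\beta^2+c)\w^2$ give
\[
d\log\alpha^2 = \frac{2\beta}{\beta^2+c}\,d\beta = d\log(\beta^2+c),
\]
so $\alpha^2/(\beta^2+c)$ is constant on the connected set $\setU$. The case $\alpha\equiv0$ is consistent with this.

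For (3), I will work in $\C^3$ with a lifted frame, that is, a point $\bz$ with $\<\bz,\bz\>=\epsilon r^2$ and lifts $\be_1,\be_2$ of $\ve_1,\ve_2$. The extended ruling is the projectivization of the span of $\bz$ and $\be_1$. I will look for a candidate point of the form $\bq = \bz + \lambda\,\be_1$, with the coefficient $\lambda$ a function of $\beta$ (something like $\lambda = \beta/c$ or $-1/\beta$, adjusted by a phase), chosen so that $d\bq\equiv 0$ modulo $\bq$. The required relations among the forms are exactly $\pi_1=\pi_2=0$ together with the pullback relations \eqref{Apullback}; the relation $\w^4_2=0$ says that $\ve_2$ rotates only within the ruling. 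Then $\zpt=\pi\bq$ is constant on $\setU$. The norm is $\<\bq,\bq\>=\epsilon r^2+|\lambda|^2$, and its sign is governed by $\beta^2+c$, which yields the final location statement: $\zpt\in\chtwo$ when $\beta^2+c>0$ and $\zpt\in\cptwo_1$ when $\beta^2+c<0$, in the case $c<0$.

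For (4), each $g\in G$ maps rulings to rulings and preserves $\setU$ up to shrinking. It therefore maps the common point of the extended rulings to itself, because two or more distinct extended rulings meet only at $\zpt$.
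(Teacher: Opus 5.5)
\textbf{Gap in part (1): the step $q=0$.} You leave this as ``the main obstacle'', and the computation you propose for it cannot succeed. Differentiate $\hat\beta\,\omega^2_1=(\hat\beta^2-c)\omega^1+q\,\omega^3$ along $S$, using $\omega^4=0$, $\omega^3_1=\omega^4_2=0$, $\omega^2_3=\omega^4_1=\hat\beta\,\omega^3$ and $\Phi^2_1=-4c\,\omega^1\wedge\omega^2$. The $\omega^1\wedge\omega^2$ terms cancel identically, and all that survives is $(dq-3q\hat\beta\,\omega^2)\wedge\omega^3=0$, the analogue of \eqref{secondspec2}. This places no algebraic condition on $q$. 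Combined with your invariance argument it only says $dq=3q\hat\beta\,\omega^2$, which is compatible with $q\neq0$. Your fallback $d(d\hat\alpha)=0$ gives $q(q+\hat\alpha\hat\beta)=0$, which leaves open exactly the degenerate branch you were worried about.

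The missing idea is to apply the same trick to $\hat\beta$, whose $\omega^2$-coefficient $\hat\beta^2+c$ is known to be nonzero on $\setU$. Along $S$ the structure equations give $d\omega^2=\omega^1\wedge\omega^2_1+\omega^3\wedge\omega^2_3=\frac{q}{\hat\beta}\,\omega^1\wedge\omega^3$. Hence
\[
0=d(d\hat\beta)=(\hat\beta^2+c)\,d\omega^2=\frac{q(\hat\beta^2+c)}{\hat\beta}\,\omega^1\wedge\omega^3 ,
\]
so $q=0$ at once. This uses neither invariance of $q$ nor transversality, and it is exactly the paper's argument.

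The gap propagates into your part (3), which needs $\pi_2=0$: $q$ enters through $\psi^1_1$. With $q\neq0$, the $\mathbf E_1$-component of $d(\mathbf E_1-\mathrm i r\hat\beta\mathbf E_0)$ picks up a term $\mathrm i(q/\hat\beta)\eta^3$ that has no matching $\mathbf E_0$-term. So the span of that vector would no longer be constant.

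\textbf{Smaller points.} For $s=0$, the linear dependence of $d\hat\alpha$ and $d\hat\beta$ is not ``used up'' by $p=0$. Once $p=0$, the remaining $2\times2$ minor is $s(\hat\beta^2+c)$, so $s=0$ directly, as in the paper. Your orbit argument is the same fact in disguise, and transversality plays no role there, because $\ker\omega^2\neq\ker\omega^3$ automatically.

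In (3) the coefficient is not a free guess. With $\mathbf z=r\mathbf E_0$ and $\mathbf e_1=\mathbf E_1$, one needs $\lambda=\mathrm i/\beta$, i.e.\ the paper's vector $\mathbf E_1-\mathrm i r\hat\beta\mathbf E_0$ up to scale. This gives $\langle\mathbf q,\mathbf q\rangle=\epsilon r^2(\beta^2+c)/\beta^2$ and hence the stated location of $\zpt$. Your other candidate $\lambda=\beta/c$ would give $r^4(\beta^2+c)$, which has the opposite sign when $c<0$.

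For (2), differentiating $\alpha^2/(\beta^2+c)$ directly avoids the zeros of $\alpha$ that the $d\log\alpha^2$ computation runs into.
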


\begin{proof} Part (1): Continuing the argument of the previous proof, the condition that $d\aa$ and $d\bb$ are linearly dependent along $S$, and the fact that $p=0$ and $b^2+c \ne 0$ along $S$, imply using \eqref{abval} that $s=0$ along $S$.
Then taking the exterior derivative of $d\bb = (\bb^2+c)\w^2$, and using the value for $\w_2^1$ along $S$ given by \eqref{abval}, we obtain
$
0= \dfrac{q(\bb^2+c)}{\bb} \w^1 \wedge \w^3
$
and conclude that $q=0$ along $S$.  Conclusion \ref{pivanish1} now follows.

\medskip
\noindent
Part (2):
Pulling back the first two equations in \eqref{abval} to $\setU$, using $q=0$, and pairing with the frame vector $\ve_2$ gives
$\ve_2(\alpha)= \alpha \beta$,  $\ve_2 (\beta) = \beta^2+c$
on $\setU$.  It follows that $d(\alpha^2 / (\beta^2+c)) = 0$ on $\setU$, and hence $\alpha^2/(\beta^2+c)$ is constant
on the connected set $\setU$.

\medskip
\noindent
Part (3): 
We will obtain our results using elements of complex projective geometry. 
Because $\Mh$ is ruled, 
it is foliated by open subsets of complex projective lines.   We will show that
there exists a common point of $\cptwo$ through which all these lines pass.

We will compute using complex projective moving frames (see, e.g., Chapter 3 in \cite{CfB2}).  
Let $\bigG$ be $U(3)$ or $U(1,2)$ depending on whether $\Ma$ is $\cptwo$ or $\chtwo$ respectively.  
Let $\bE_0, \bE_1, \bE_2$ be the columns of an element of $\bigG$, regarded as $\C^3$-valued functions on $\bigG$.
The complex-valued Maurer-Cartan forms $\psi^i_j$ on $\bigG$ (where $0 \le i,j \le 2$) are defined by
\begin{equation}\label{dees}
d\bE_i= \bE_j \psi_j^i.
\end{equation}
Recalling that $\epsilon = \operatorname{sign}(c)$, these satisfy relations
$$\psi^j_i = \begin{cases}
-\epsilon\overline{\psi^i_j} & \text{if exactly one of $i,j$ is equal to zero,}\\
-\overline{\psi^i_j} & \text{otherwise,}
\end{cases}$$
which mean that the matrix-valued 1-form $(\psi^i_j)$ takes value in the Lie algebra of $\bigG$.

Define a submersion $\Pi:\bigG \mapsto \F$ as follows: if $g=(\bE_0, \bE_1, \bE_2)\in \bigG$, then $\Pi(g)$ is the unitary frame with basepoint 
$\pi(\bE_0)$ and components
\begin{equation}\label{eident}
\ve_1 = \pi_* \bE_1, \quad \ve_2 = \pi_* \ri \bE_1, \quad \ve_3 = \pi_* \bE_2,\quad \ve_4 = \pi_* \ri \bE_2.
\end{equation}
This mapping makes $\bigG$ into a principal $S^1$-bundle over $\F$, since the action $\bE_0 \mapsto e^{\ri\theta}\bE_0$ is simply transitive on the fibers.  We will trivially extend $\Pi$ to a mapping from $\bigG \times \W$ to $\F \times \W$ by applying the identity on the second factor.  For use below, we will need the following relationships (derived in \S3.1 of \cite{dalembert}) between the Maurer-Cartan forms of $\bigG$ and the pullbacks of the canonical and connection forms on $\F$:
\begin{equation}\label{pully}
\begin{aligned}
\Pi^*(\w^1 + \ri \w^2) &= r\psi^1_0, & \quad \Pi^*(\ri \w^2_1) &=\psi^1_1 - \psi^0_0,\\
\Pi^*(\w^3 + \ri \w^4) &= r\psi^2_0, & \Pi^*(\ri \w^4_3) &= \psi^2_2 - \psi^0_0, \\
\Pi^*(\w^3_1 + \ri\w^4_1) &= \psi^2_1, & 
\end{aligned}
\end{equation}
where $r$ is the constant such that $c=\epsilon/r^2$.
We will  regard the $\psi^a_b$ as 1-forms on $\bigG \times \W$, suppressing the pullback from $\bigG$.

From \eqref{pully} we see that the pullbacks under $\Pi$ of the factors in the independence condition 
$\Omega = \w^1\wedge \w^2 \wedge \w^3 \ne 0$ will be given by the wedge product of the real and imaginary parts of $r\psi_0^1$ and the real part of $r\psi_0^2$.  Thus, for the sake of convenience we define the real-valued 1-forms
\begin{equation}\label{etadefs}
 \eta^1 :=r\realpart\psi^1_0 = \Pi^* \w^1, \quad \eta^2 :=r\imagpart\psi^1_0=\Pi^*\w^2, \quad \eta^3 = r\realpart\psi^2_0
=\Pi^* \w^3\end{equation}
on $\bigG\times \W$.  Then since $\psi_0^0$ is nonzero on the fibers of $\Pi$, the 1-forms $\eta^1$, $\eta^2$, $\eta^3$ and 
$\psi_0^0$ restrict to give a coframe on the 4-dimensional submanifold $\Pi^{-1}(S) \subset \bigG \times \W$.

To show that the extensions of all the rulings in $\setU$ pass through a common point, we will 
define a certain $\C^3$-valued function on $\Pi^{-1}(S)$ giving a vector which lies in the deprojectivization $\pi^{-1}(\ell)$ of each extended ruling $\ell$ of $\setU$.  By differentiating this vector-valued function we will show that its span (over $\C$) is fixed as one moves across $S$. 
Thus, the image of this vector under projectivization $\pi$ is a single point $\zpt \in \cptwo$ through which each extended ruling passes.

To facilitate our calculation, we first note that the vanishing along $S$ of the generator 1-forms of $\I$ in \eqref{defofI}, as well as the vanishing of the 1-form $\pi_2$ given by Lemma \ref{lemma-pis}, imply using \eqref{pully} that along 
$\Pi^{-1}(S)$ the Maurer-Cartan forms satisfy the relations
\begin{equation}\label{psivals}
\imagpart \psi_0^2=0,\quad \psi_2^1=\ri \bb\eta^3, \quad \psi_1^1 = \psi_0^0 +
\ri \left(\frac{\bb^2-c}{\bb}\right)\eta^1, \quad\psi_2^2=\psi_0^0 + \ri(\bb\eta^1+\aa\eta^3).
\end{equation}
Using \eqref{dees} and the fact that $d\bb=(\bb^2+c)\w^2$ along $S$, we calculate that along $\Pi^{-1}(S)$, 
\begin{equation}\notag
d(\bE_1-\ri r\bb \bE_0)=\bE_0\psi_1^0 + \bE_1\psi_1^1 + \bE_2\psi_1^2 - \ri r(\bb^2+c)\bE_0 \eta^2 -\ri r\bb (\bE_0\psi_0^0 + \bE_1\psi_0^1 + \bE_2\psi_0^2). 
\end{equation}
Using \eqref{etadefs}, \eqref{psivals}, $\psi_1^0=-\epsilon\overline{\psi_0^1}$ and simplifying, we obtain
\begin{equation}\label{diffEE}
d(\bE_1-\ri r\bb \bE_0)
= (\bE_1 -\ri r \bb \bE_0) \bigg(\psi_0^0 - \frac{\ri c}{\bb} \eta^1+\bb\eta^2 \bigg)
\end{equation}
along $\Pi^{-1}(S)$.
Thus, the vector-valued function $\bE_1-\ri r\bb \bE_0$ on $\Pi^{-1}(S)$ is fixed up to a complex multiple.  If we let $\Zh$ be the complex 1-dimensional linear subspace of $\C^3$ spanned by $\bE_1-\ri r\bb \bE_0$, it follows that this subspace is constant on $\Pi^{-1}(S)$.

For a given point $\mpt \in \setU$, let $\ell_\mpt$ be the extended ruling through $\mpt$ and let $\ellhat_\mpt =\pi^{-1}(\ell_\mpt)\subset \C^3$.  Because $\ve_1$ and $\ve_2$ are tangent to $\ell_\mpt$ at $\mpt$, then due to \eqref{eident} and the fact that at each point along the fiber $\Pi^{-1}(\sigmah(\mpt))$ we have 
$\pi(\bE_0) = \mpt$, we conclude that $\ellhat_\mpt$ equals the complex span of the vectors $\bE_0$ and $\bE_1$ .  It follows that the fixed subspace $\Zh$ lies in $\ellhat_\mpt$ for all $\mpt \in \Mh$.  Hence, all the extended rulings $\ell_\mpt$ pass through the point $\zpt =\pi(\Zh) \in \cptwo$.
Because 
\begin{equation}\notag
\< \bE_1-\ri r \bb \bE_0, \bE_1 - \ri r \bb \bE_0\> = \frac{\bb^2+c}{c},
\end{equation}
we see that for $\Ma = \chtwo$ (i.e., $c<0$) this common point $\zpt$ is in $\chtwo$ when $\beta^2+c>0$ but lies 
in the  $\cptwo_1$ when $\beta^2+c <0$.  Thus, if we let $\teps=1$ when $\zpt \in \cptwo_1$ and $\teps=-1$ when $\zpt \in \chtwo$, we have
\begin{equation}\label{betasign}
\operatorname{sign}( \beta^2+c) = \epsilon \teps.
\end{equation}

\medskip
\noindent
Part (4): Because of the transversality condition, $G$ must fix the point $\zpt$ through which all the rulings in $\setU$ pass.
\end{proof}

\begin{prop}\label{prop-case2} Let $\Mh$, $\Mho$, $\pi_2$ be as in Lemma \ref{lemma-pis}, and assume that $\Mh$ is of cohomogeneity {\em at most one} in the sense of Theorem \ref{side-theorem}.  Suppose that $c<0$ and $\setU \subset \Mho$ is 
a connected open set on which $\beta^2 + c = 0$.  Then $\sigmah^* \pi_2 = 0$ and there is an ideal point $\zprimept \in \partial \CH^2$ (i.e., the boundary of $\CH^2$) such 
that each ruling in $\setU$, when extended, passes through $\zprimept$.  Furthermore, $G$ fixes $\zprimept$.
\end{prop}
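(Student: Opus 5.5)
The plan is to follow the proof of Proposition~\ref{prop-case1} closely. There are two changes. The role of the nonvanishing of $\beta^2+c$ will be taken over by the structure equations. And the common point will turn out to be determined by a null vector.

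\textbf{Setting up.} On $\setU$ we have $\beta^2 = -c = 1/r^2$, so $\beta \equiv 1/r$ is constant and $\beta^2 - c = -2c$. Lemma~\ref{lemma-pis} still applies on $\setU$. Since $\sigmah^*\pi_1 = 0$ gives $d\bb = (\bb^2+c)\w^2 = 0$, this part is consistent and carries no new information. As in the proof of that lemma, the Cartan lemma gives
\[
\pi_2 = q\,\w^3, \qquad \pi_3 = q\,\w^2 + s\,\w^3
\]
along $S$. (Here $p=0$ because $\sigmah^*\pi_1=0$; unlike the earlier case, dependence of $d\aa$ and $d\bb$ no longer forces $s=0$, since $d\bb$ vanishes.)

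\textbf{Showing $q=0$ (the main obstacle).} The step I expect to be hardest is proving $\sigmah^*\pi_2 = 0$. Previously this came from differentiating $d\bb = (\bb^2+c)\w^2$, and that is now the vacuous identity $0=0$. Instead I would differentiate the relation
\[
\bb\,\w^2_1 - (\bb^2 - c)\,\w^1 = q\,\w^3
\]
along $S$, with $\bb$ constant. The procedure is as follows.
\begin{enumerate}[label={\rm (\alph*)}]
\item Expand $d\w^2_1$ using the curvature form $\Phi^2_1$.
\item Evaluate the terms $\w^2_k\wedge \w^k_1$ using the pulled-back connection forms \eqref{Apullback}.
\item Expand $d\w^1$ and $d\w^3$ by \eqref{dcanon}.
\end{enumerate}
After these substitutions, and the analogous differentiation of $d\aa = \aa\bb\,\w^2 + q\,\w^2 + s\,\w^3$, I expect to obtain an identity of the form
\[
dq\wedge\w^3 = (\text{linear in } q)\,\w^1\wedge\w^2 + \cdots .
\]
Comparing $\w^1\wedge\w^2$ coefficients should force $q=0$.

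If the structure equations alone leave $q$ undetermined, my fallback is to bring in the group action. The functions $\alpha$, and hence $s$ and $q$, are $G$-invariant. The transversality condition says the orbit directions are not contained in $\spanr\{\ve_1,\ve_2\}$. Combining these, $d\aa$ and $dq$ must annihilate a two-dimensional space of orbit directions that contains a vector with nonzero $\w^3$-component. Fed into the differentiated identities, this should again force $q=0$.

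\textbf{The common ideal point.} With $\sigmah^*\pi_2=0$ in hand, the relations \eqref{psivals} hold along $\Pi^{-1}(S)$ exactly as before. The computation leading to \eqref{diffEE} used only these relations and $d\bb = (\bb^2+c)\w^2$, so it goes through unchanged. Hence the complex line
\[
\Zh' = \spanc\{\bE_1 - \ri r\bb\,\bE_0\}
\]
is constant on $\Pi^{-1}(S)$. It lies in $\ellhat_\mpt = \spanc\{\bE_0,\bE_1\}$ for every $\mpt\in\setU$, so every extended ruling in $\setU$ passes through $\zprimept = \pi(\Zh')$. The norm formula now gives
\[
\<\bE_1 - \ri r\bb\,\bE_0,\ \bE_1 - \ri r\bb\,\bE_0\> = \frac{\bb^2+c}{c} = 0,
\]
so $\Zh'$ is a null line and $\zprimept$ lies on the ideal boundary $\partial\CH^2$.

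\textbf{$G$ fixes $\zprimept$.} Each $g\in G$ preserves $\Mh$, so it maps rulings to rulings (they are the leaves of $W^\perp$). Transversality implies that the rulings through a neighbourhood in $\setU$ are not all the same line. Two distinct complex projective lines meet in exactly one point, so $\zprimept$ is the unique common point of these extended rulings. Since $g$ carries them to extended rulings through nearby points, which again all pass through $\zprimept$ (using real-analyticity or connectedness), it follows that $g(\zprimept) = \zprimept$.
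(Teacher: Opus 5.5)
The gap is exactly at the step you flag as the main obstacle, proving $q=0$. Your primary plan cannot work. If you carry out (a)--(c) with $\hat\beta\equiv 1/r$ and $c=-1/r^2$, the differentiated $\pi_2$-relation reduces along $S$ to
\[
(dq-3r^{-1}q\,\w^2)\wedge\w^3=0 .
\]
Differentiating $d\hat\alpha=(r^{-1}\hat\alpha+q)\w^2+s\w^3$ gives \eqref{firstspec2}. The only $\w^1\wedge\w^2$ information in either identity is that $dq$ has no $\w^1$-component; neither says that $q$ vanishes.

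This is a structural fact, not a bookkeeping issue. On the locus $\hat\beta=1/r$ the form $\pi_1$ vanishes identically, and the 2-form generators reduce to $\w^3\wedge\pi_2$ and $\w^2\wedge\pi_2+\w^3\wedge\pi_3$. Their integral elements are $\pi_2=q\w^3$, $\pi_3=q\w^2+s\w^3$ with $q,s$ free. The first Cartan character is $2$, which equals the dimension of this family, so the system is involutive. By Cartan--K\"ahler there are therefore ruled hypersurfaces with $\beta\equiv 1/r$ and $q\neq 0$. No amount of differentiating the structure equations will produce $q=0$; the symmetry hypothesis has to enter.

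Your fallback points the right way, but it does not close the gap. Invariance of $\hat\alpha$ and $q$ makes $d\hat\alpha$ and $dq$ dependent. Writing $dq=3r^{-1}q\w^2+Q\w^3$, this gives at first order only the single relation $(r^{-1}\hat\alpha+q)Q=3r^{-1}qs$, which does not force $q=0$. Transversality adds only that the common annihilator is not proportional to $\w^3$, which gives no contradiction.

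The missing idea is to use the invariant $s=W(\alpha)$. Because the orbits are 2-dimensional, $d\hat\alpha$ and $ds$ are linearly dependent along $S$. By \eqref{firstspec2}, the $\w^1$-component of $ds$ is a nonzero constant multiple of $q(q+r^{-1}\hat\alpha)$, whereas $d\hat\alpha$ has no $\w^1$-component. Hence $q(q+r^{-1}\hat\alpha)=0$. (Where $d\hat\alpha$ vanishes on an open set this is automatic, since then $q=-r^{-1}\hat\alpha$.) Now suppose $q\neq 0$ somewhere. Then $q=-r^{-1}\hat\alpha$ on an open set, so $d\hat\alpha=s\w^3$ and $dq=-r^{-1}s\w^3$ have no $\w^2$-component. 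This contradicts $(dq-3r^{-1}q\w^2)\wedge\w^3=0$, so $q=0$.

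Transversality plays no role in this step; it is needed only for the final assertion that $G$ fixes $\mathrm z'$. Once $q=0$ is established, the rest of your argument agrees with the paper's: the treatment of the null line $\operatorname{span}_{\C}\{\mathbf E_1-\ri\mathbf E_0\}$ via \eqref{diffEE}, and the overlap argument for $g(\mathrm z')=\mathrm z'$.
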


\begin{proof}  On $\setU$ we have $\beta=1/r$. For $\sigmah$ as in Lemma \ref{lemma-pis}, let $S = \sigmah(\setU)$, along which $b$ is equal to $1/r$.  
Following the argument in the proof of Lemma \ref{lemma-pis}, there are functions $q,s$ on $S$ such that equations \eqref{pivals} hold (with $p$ replaced by zero), so that
\begin{subequations}\label{specabval}
\begin{align}
d\aa &= (r^{-1}\aa+q)\omega^2 +s \omega^3, \label{specdaval} \\
r^{-1} \w^2_1 &= 2r^{-2} \omega^1 +q\omega^3 \label{specomval}
\end{align}
\end{subequations}
along $S$. Taking the exterior derivative of both sides of \eqref{specdaval}, using the structure equations \eqref{dcanon}, the values along 
$S$ for the connection forms given by the vanishing of $\theta_1, \theta_2, \theta_3$, and the values for $d\aa$ and $\omega^2_1$ given by \eqref{specdaval},
we see that the 2-form
\begin{equation}\label{firstspec2}
(dq+2r^{-1} s\w^3)\wedge \w^2 + (ds + r^{-1} q (q+r^{-1} \aa)\w^1)\wedge\w^3 
\end{equation}
must vanish along $S$. Similarly, differentiating \eqref{specomval} leads to another 2-form that must vanish along $S$, namely
\begin{equation}\label{secondspec2}
(dq-3r^{-1} q \w^2)\wedge \w^3.
\end{equation}

Since $\sigmah^* s = W(\alpha)$, it must be invariant under isometries that preserve $\Mh$.  Hence along $S$ the differentials $d\aa$ and $d s$ must be linearly dependent.  However, the vanishing of the 2-form in \eqref{firstspec2} implies that
\begin{equation}\notag
d s \equiv -r^{-1} q (q+r^{-1} \aa )\w^1 \quad \mod \w^2, \w^3,
\end{equation}
whereas \eqref{specabval} shows that $d\aa$ has no $\w^1$-component. Hence $q(q+r^{-1}\aa) =0$ at each point of $S$.  If
$q\ne 0$ at some point, then $q=-r^{-1}\aa $ on an open neighborhood of that point; however, this is impossible since \eqref{specdaval} shows that the $
\omega^2$ component of $d\aa$ is then zero while \eqref{secondspec2} shows that the $\omega^2$ component of $dq$ is nonzero.
Hence, $q=0$ on all of $S$, and $\sigmah^* \pi_2=0$.

The proof concludes by specializing the calculation in part (3) of the proof of Proposition \ref{prop-case1} to the case where $\epsilon=-1$ and $\bb = 1/r$.  
Since $\pi_2=0$ along $S$, the formula for $\psi_1^1$ in \eqref{psivals} still holds in this case, so the calculation \eqref{diffEE} still goes through with $b=1/r$.  In this case, it shows that the null vector $\bE_1-\ri \bE_0$ is fixed up to a complex multiple on $\Pi^{-1}(S)$, and hence all the rulings in $\setU$, when extended, pass through the ideal point $\zprimept =\pi(\bE_1-\ri \bE_0)$.  Again, because of the transversality condition, $G$ must fix  $\zprimept$, the point through which all the rulings in $\setU$ pass.
\end{proof}

\begin{proof}[Proof of Theorem \ref{main-theorem}]

\noindent Part (i):  We need only consider the case $c<0$.  
Since $\beta^2+c$ is a real-analytic function on $\Mh$, and not identically zero, it is nonzero on a dense open subset of $\Mh$.  Moreover, it cannot change sign on $\Mh$.  

Otherwise, since $\Mh$ is connected there is a real-analytic curve $\tau:I \to \Mh$ connecting points where $\beta^2+c$ has opposite signs.  By shrinking the domain of $\tau$ if necessary and reparametrizing, we can assume that $I=[-h,h]$ with $\beta^2+c<0$ at $\tau(t)$ for $t\in [-h,0)$, $\beta^2+c>0$ at $\tau(t)$ for $t \in (0,h]$, and that $\beta>0$ along the image of $\tau$.

By Proposition \ref{prop-case1}, there is a point $\zpt^+ \in \chtwo$ such that
for every $t \in (0,h]$ the ruling through $\tau(t)$ passes through $\zpt^+$, 
and a point $\zpt^- \in \cptwo_1$ such that 
for every $t\in [-h,0)$ the ruling through $\tau(t)$ passes through $\zpt^-$.  Moreover, from Proposition \ref{prop-case1} the isometry group $G$ that preserves $M$ fixes both $\zpt^+$ and $\zpt^-$.  

Because $\beta>0$, there is a real-analytic orthonormal frame $(W,X,Y)$ along the image of $\tau$ with respect to which the shape operator takes the form \eqref{ruled-shape}.  In particular,  $\{X,Y\}$ spans the ruling through each point $\tau(t)$.  The fact that the extended ruling through $\tau(t)$ passes through $\zpt^+$ for $t>0$ implies, by analyticity, that the same condition holds for all $t \in I$.  Similarly, the extended ruling through $\tau(t)$ passes through $\zpt^-$ for all $t\in I$.  In other words, this extended ruling is the unique projective line containing $\zpt^+$ and $\zpt^-$  and the image of $\tau$ is contained in this line.  Since $G$ fixes both $\zpt^+$ and $\zpt^-$, then it stabilizes the whole line.  But this contradicts the fact that $G$ acts transversely to the rulings at each endpoint of $\tau$.

\medskip
\noindent
Part (ii): 
Let $\setU \subset \Mho$ be the open set where $\beta^2+c\ne 0$ and $\beta>0$, which is dense in $\Mh$.  We claim that there is a single point  $\zpt \in \chtwo \cup \cptwo_1$ through which all the extended rulings in $\Mho$ pass.  For, suppose $\setU_1, \setU_2$ are two connected components of $\setU$. By applying Proposition \ref{prop-case1} to each of these, we obtain a point $\zpt_1$ through which all the extended rulings of $\setU_1$ pass, and a point $\zpt_2$ through which all extended rulings of $\setU_2$ pass.  (By Part (i), either both these points are
in $\chtwo$ or both are in $\cptwo_1$.)  If these are two distinct points, then applying the argument in Part (i) along path from a point in $\setU_1$ to a point in $\setU_2$ leads to a contradiction to the transversality of the action of $G$.

Let $\Zh \subset \C^3$ be the complex 1-dimensional subspace  $\pi^{-1}(\zpt)$,
let $\scV \subset \C^3$ be its orthogonal complement, and let 
$\fm = \pi(\scV)$, a complex projective line in $\cptwo$.
Although $\scV$ and $\Zh$ are fixed subspaces, each can be expressed as the span of certain combinations of the vector-valued functions $\bE_i$ which were defined above in the proof of Proposition \ref{prop-case1} and are evaluated along the fiber $\Pi^{-1}(\sigmah(\mpt))$ for a given point $\mpt\in \Mho$.  From that proof, we have 
$$\Zh = \spanc \{ \bE_1 - \ri r \beta \bE_0 \}.$$
It is then easy to check that its orthogonal complement is
$\scV = \spanc \{ \bE_0 - \ri \epsilon r \beta \bE_1, \bE_2 \}.$
As in the same proof, let $\ell_{\mpt}$ be the extended ruling through $\mpt$; then $\ellhat_{\mpt} = \pi^{-1}(\ell_{\mpt}) = \spanc \{ \bE_0, \bE_1\}.$
It follows that $\ell_{\mpt}$ intersects $\fm$ at $\qpt=\pi(\bE_0 - \ri \epsilon r \beta \bE_1)$.
Moreover, since the tangent space to $\ell_{\mpt}$ at $\qpt$ is spanned over $\R$ by $\{\pi_* (\bE_1 - \ri r \beta \bE_0), \pi_* (\ri \bE_1 + r\beta\bE_0)\}$,
then $\ell_{\mpt}$ is perpendicular to $\fm$ at $\qpt$.

\medskip
\noindent
Parts (iv) and (v):  
Since $c = \epsilon/r^2$, 
\begin{equation}\label{normy}
\< \bE_0 - \ri \epsilon r \beta \bE_1, \bE_0 - \ri \epsilon r \beta \bE_1\> = \epsilon + \epsilon^2 r^2 \beta^2 =  r^2(\beta^2+c).
\end{equation}
Suppose that $\Ma = \chtwo$, and assume that $\mpt$ is not one of the isolated zeros of $\beta^2+c$.
If $\beta^2+c<0$ at $\mpt$ then \eqref{normy} shows that $\qpt \in \chtwo$, while if $\beta^2+c>0$ at $\mpt$ then $\qpt \in \cptwo_1$.
In the latter case, since $\<\ ,\ \>$ restricts to $\scV$ to be positive definite, $\fm$ lies inside $\cptwo_1$.

\newcommand\handy{h}  \medskip
\noindent
Part (iii):
It remains to show that the curve $\gam$ traced out by the point $\qpt$ where the extended rulings intersect $\fm$ has constant geodesic curvature.  

As in \eqref{betasign} we choose $\teps=\pm1$ so that $\epsilon\teps$ is the sign of $\beta^2+c$.
Since $\qpt = \pi(\bE_0 - \ri \epsilon r \beta \bE_1)$, we will normalize the vector in parentheses, letting 
$$\bQ=\frac{1}{\handy}\,(\bE_0 - \ri \epsilon r \beta \bE_1), 
\qquad \text{where }\handy:=\sqrt{\epsilon \teps (\b^2+c)}.$$
One can check that $\bQ$ takes value in the 5-dimensional hypersurface
$$\Z' = \{ \bz \in \C^3 \mid \< \bz, \bz \> = \epsilon \teps r^2\}.$$
When $\teps=1$ then $\Z'$ is the same as $\X'$, and in any case this is the space in which
the curve $\bdelta$ used in our construction in \S\ref{construction} takes value.
We will determine the geodesic curvature of $\gam$ by computing the relevant component of the shape operator of the surface
$\Sigma = \pi^{-1}(\gam) \cap \Z'$.   

We begin by computing the derivative of $\bQ$ as a function on 
$\Pi^{-1}(S)$:
$$d\bQ = \ri \bQ (\imagpart \psi^0_0 + \beta \eta^1) + \teps r \handy \bE_2 \eta^3.$$
This shows that $\{ \frac{1}{r}\ri \bQ,  \bE_2 \}$ is an orthonormal basis for the tangent space 
to $\Sigma$ at $\bQ$, and $\duals^1 = \imagpart \psi^0_0 + \beta \eta^1$, $\duals^2 = \teps r \handy \eta^3$ are the dual 1-forms.  Since the unit vector $\bE_2$ is orthogonal to the fiber of $\pi$ through $\bQ$, then $\duals^2$ is the pullback of the arclength differential along $\gam$.  We further compute that
$$d\bE_2 = r \ri \bE_2 \duals^1 + \dfrac{\alpha}{\teps r \handy} \ri \bE_2 \duals^2 - \dfrac{\epsilon\teps}{r^2} \bQ \duals^2.$$
Then $\nablat \bE_2$ is computed by orthogonally projecting the vector part of this
vector-valued 1-form into $T_{\bQ} \X'$.  By pairing the 1-form part with $\bE_2$ we get
$$|\nablat_{\bE_2} \bE_2 | = \dfrac{|\alpha|}{r\handy} = 
\dfrac{|\alpha|}{r \sqrt{ |\beta^2+c|}},$$
which is constant due to Proposition \ref{prop-case1}. 
\end{proof}

 \newcommand{\fg}{\mathfrak g}
\newcommand{\fh}{\mathfrak h}
\newcommand{\fk}{\mathfrak k}
\newcommand{\mh}{\mathtt m}
\newcommand{\mv}{\mathtt v}
\newcommand{\mw}{\mathtt w}

\newcommand{\GM}{G}  \newcommand{\gm}{\fg}\newcommand{\GH}{H}  \newcommand{\gh}{\fh}\newcommand{\GK}{K}  \newcommand{\gk}{\fk} \newcommand{\homm}{\phi} 
\begin{proof}[Proof of Theorem \ref{side-theorem}]
Let $\GM$ be the subgroup of the ambient isometry group such that $G$ preserves $\Mh$ and its principal orbits in $\Mh$ are two-dimensional.
The rest of the proof is devoted to showing that $\GM$ must be contained in a larger subgroup $\GH$ of the ambient isometry group having $\Mh$ as an orbit.

Recall from \S\ref{background} that each isometry of $\chtwo$ is induced by an element of the matrix group $SU(2,1)$ acting on vectors in $\C^3$.
However, this correspondence is not one-to-one. There is a surjective homomorphism from $SU(2,1)$ to the ambient isometry group with a finite kernel consisting of multiples of the identity matrix by a cube root of unity. In what follows, instead of working directly with groups of isometries, we will work with their inverse images inside $SU(2,1)$.

First observe that Proposition \ref{prop-case2} applies to the whole of $\Mh$, so that all extended 
rulings of $\Mh$ pass through a common point $\zprimept \in \partial \chtwo$.  
Because $SU(2,1)$ acts transitively on lines in the null cone,
we may assume without loss of generality that 
$ \zprimept = \pi (\e_0-\e_2)$.  Let $\GK$ be the
5-dimensional subgroup of $SU(2,1)$ stabilizing the complex span of $\e_2-\e_0$.  
For convenience, we choose coordinates on $\C^3$ in which 
\[\e_0= \dfrac1{\sqrt{2}}\begin{bmatrix} -1 \\ 0\\ 1\end{bmatrix}, \quad
\e_1 = \begin{bmatrix} 0 \\ 1 \\ 0 \end{bmatrix}, \quad
\e_2 = \dfrac1{\sqrt{2}}\begin{bmatrix}1\\ 0 \\ 1\end{bmatrix},
\]
and 
$
\langle z, w \rangle = \realpart\ (z_0 \overline{w_2} + z_1 \overline{w_1} + z_2\overline{w_0}).
$
With respect to these coordinates, we can regard $\GK$ as the matrix group 
\begin{equation}\label{matrixKform}
\left\{ \left. \begin{pmatrix} \lambda & 0 & 0 \\ 0 & \overline{\lambda}/\lambda & 0 \\ 0 & 0 & 1/\overline{\lambda} \end{pmatrix}
\begin{pmatrix} 1 & - \overline{b} & \ri p -\tfrac12 |b|^2 \\ 0 &1 & b \\ 0 & 0 & 1 \end{pmatrix} \right\vert \lambda\in \C^*, b\in \C, 
p\in \R \right\}.\end{equation}
(The second factor is an arbitrary element of the subgroup of $SU(2,1)$ fixing $(1,0,0) = \tfrac1{\sqrt{2}}(\e_2 - \e_0)$.)
The Lie algebra of $\GK$ is 
\begin{equation}\label{ggeneric}
\gk = \left\{ \left.\begin{pmatrix} a & -\overline{b} & \ri p\\ 0 & \overline{a}-a & b \\ 0 & 0 & -\overline{a} \end{pmatrix}
\right\vert a,b\in \C, p\in \R \right\}.
\end{equation}

Let $\ell$ be an extended ruling of $\Mh$.  Then $\ell$ will pass through another ideal point besides $\zprimept$, and because $\GK$ acts transitively
on such points, we may assume without loss of generality that $\ell$ is the projectivization of the subspace
$\scV = \spanc \{ \e_2-\e_0,\e_2+\e_0 \}$.
Let $\GK_1$ be the subgroup of $\GK$ that stabilizes $\ell$.  Since $\e_2+\e_0$ has coordinates $\sqrt{2}(0,0,1)$, $\GK_1$ consists of matrices of the form \eqref{matrixKform} with $b=0$; thus, its Lie algebra $\gk_1$ is spanned (over $\R$) by the matrices
$$\mh_0 = \begin{pmatrix} \ri & 0 & 0 \\ 0 & -2\ri & 0 \\ 0 & 0 & \ri \end{pmatrix},\quad
\mh_1 = \begin{pmatrix} 1 & 0 & 0 \\ 0 & 0 & 0 \\ 0 & 0 & -1\end{pmatrix}, \quad
\mh_2 =\begin{pmatrix} 0 & 0 & \ri \\ 0 & 0 & 0 \\ 0 & 0 & 0 \end{pmatrix}.$$
Note that $\exp(t \mh_0)$ acts trivially on $\ell$.
 
In what follows we let $\GM_1$ denote the maximal connected subgroup of $\GM$ that stabilizes $\ell$, and 
let $\gm_1$ denote the Lie algebra of $\GM_1$. 
The next two results give the dimension and codimension of this Lie algebra, and use it to determine the structure
of $\gm$.

\begin{lemma}\label{codimension} $\GM_1$ has codimension one in $\GM$.
\end{lemma}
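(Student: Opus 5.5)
We need to show that $\GM_1$, the identity component of the stabilizer in $\GM$ of the extended ruling $\ell$, has codimension one in $\GM$. The plan is to study the induced action of $\GM$ on the set $\mathcal L$ of extended rulings of $\Mh$. First, every element of $\GM$ maps $\Mh$ to itself and is an isometry preserving the shape operator. It therefore preserves the structure vector field and the holomorphic distribution $W^\perp$, whose leaves are the rulings. Consequently $\GM$ permutes the extended rulings, and $\GM_1$ is (the identity component of) the isotropy subgroup at $\ell\in\mathcal L$.

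The space $\mathcal L$ is locally a one-parameter family, since $\Mh$ is three-dimensional and each ruling is two-dimensional. Hence the orbit $\GM\cdot\ell$ in $\mathcal L$ has dimension $0$ or $1$. This gives $\dim \GM - \dim \GM_1 \le 1$.

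To get equality we invoke the transversality condition. Choose a point $\ppt\in\ell\cap\Mh$ lying on a two-dimensional orbit. If $\GM_1$ had codimension zero, then $\GM_1$ would contain the identity component of $\GM$. Every one-parameter subgroup of $\GM$ would then stabilize $\ell$, so the tangent space to the orbit $\GM\cdot\ppt$ would lie inside $T_\ppt\ell$, the tangent space to the ruling. This contradicts the assumption that the action is transverse at $\ppt$. Therefore the orbit of $\ell$ is one-dimensional and $\GM_1$ has codimension exactly one.

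The main point needing care is the claim that the map $\Mh\to\mathcal L$, sending a point to the ruling through it, is a smooth submersion onto a one-manifold near $\ell$. Equivalently, we need that $\GM$ acting on $\Mh$ descends to a smooth action on the local leaf space of the ruling foliation. I would handle this locally: take a small transversal curve to the foliation through $\ppt$, for instance an integral curve of $W$. Near $\ppt$, rulings correspond bijectively to points of this curve, since the rulings are leaves of the integrable distribution $W^\perp$ on $\Mh$. Working with the Lie algebra $\gm$ then suffices: the vector fields induced by $\gm$ are tangent to $\Mh$, and their $W$-components give a linear map $\gm\to\R$. Its kernel is exactly $\gm_1$, because a Killing field tangent to $\Mh$ with no $W$-component along $\ell$ stabilizes $\ell$, by analyticity and since it permutes leaves. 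Transversality says this linear map is nonzero at $\ppt$, so its kernel $\gm_1$ has codimension one.
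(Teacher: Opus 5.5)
Your argument is correct, and it reaches the key point by a different route from the paper. The whole lemma hinges on one fact: a generator $v\in\mathfrak g$ whose vector field is tangent to $\ell$ at a single point must stabilize $\ell$.

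\emph{The paper's route.} It obtains this fact by computation. Using Proposition~\ref{prop-case2}, it places $G$ inside the stabilizer $K$ of the ideal point. It then writes out the infinitesimal action \eqref{lambdastar} of $\mathfrak k$ along $\ell$. From this it reads off that tangency at one point forces $b=0$, and hence tangency along all of $\ell\cap\chtwo$. Finally it shows that the generators tangent to $\ell$ at $\mathrm m$ form a codimension-one subalgebra, and compares this subalgebra with $\mathfrak g_1$.

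\emph{Your route.} You obtain the same fact conceptually. Elements of $G$ are holomorphic isometries preserving $M$, so they preserve $W^\perp=TM\cap JTM$. Hence each generator is a foliated vector field for the ruling foliation.

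\emph{A step to fix.} Your sentence justifying $\ker=\mathfrak g_1$ assumes the $W$-component vanishes ``along $\ell$''. Your linear functional only gives vanishing at $\ppt$, and this propagation is exactly the content the paper extracts from \eqref{lambdastar}, so you should make it explicit. Take a flow box $(x_1,x_2,y)$ at $\ppt$ whose plaques $\{y=\mathrm{const}\}$ are pieces of rulings. Requiring $[X,\partial_{x_i}]$ to be tangent to the foliation forces
\[
X=a_1\partial_{x_1}+a_2\partial_{x_2}+f(y)\partial_y .
\]
So $f(y(\ppt))=0$ gives tangency along the whole plaque. Since $\exp(tv)$ acts projectively and maps an open piece of the complex line $\ell$ into $\ell$, it stabilizes $\ell$. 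With this in place, your final paragraph alone proves the lemma. The heuristic discussion of the global leaf space $\mathcal L$, which need not be a manifold, can then be dropped.

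\emph{What each route buys.} Yours is coordinate-free and needs neither $G\subset K$ nor $\beta^2+c=0$, so it applies to any ruled hypersurface with a transverse isometric action. The paper's computation instead yields the explicit formula \eqref{lambdastar}. Its criterion $b\neq 0$ for a generator to be transverse to $\ell$ is reused in the proof of Proposition~\ref{algprop}.
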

\newcommand{\dibb}[1]{\dfrac{\di}{\di{#1}}}
\renewcommand{\mpt}{\mathrm m}
\newcommand{\upt}{\mathrm u}

\newcommand{\lambdas}{\lambda_\star}
\newcommand{\genG}{O} \newcommand{\geng}{\mathfrak o}   \newcommand{\genM}{P}  \newcommand{\genpt}{\mathrm p}

\begin{proof}Since $\ell$ is the complex line through points $\pi (1,0,0)^t$ and $\pi (0,0,1)^t$ then on an open neighborhood of $\ell$ in $\chtwo$ we can use affine coordinates 
\begin{equation}\label{laffine}
w=z_1/z_0, \qquad z=z_2/z_0,
\end{equation}
in terms of which $\ell \cap \chtwo$ is the subset where $w=0$ and $\realpart z<0$.

Recall (see, e.g., \S10.4 in \cite{CfB2}) that for any Lie group $\genG$ with 
Lie algebra $\geng$ and any smooth action 
$\lambda: \genG \times \genM \to \genM$ on manifold $\genM$, there is a Lie algebra anti-homomorphism
$\lambdas: \geng \to \mathfrak{X}(\genM)$  defined by
$$\lambdas \vv\vert_{\genpt} = \dfrac{d}{dt}\bigg\vert_{t=0} \lambda(\exp(t\vv),\genpt), \qquad \vv \in \geng, \ \genpt \in \genM.$$
Thus, the image of $\lambdas$ is the Lie algebra of vector fields on $\genM$ generating the $\genG$-action.

We compute this mapping for the action of $K$ in the vicinity of $\ell \cap \chtwo$, in terms of the real and imaginary parts of the affine coordinates defined above.  We find\footnote{To make this calculation, 
let $\psi$ be the mapping taking $(z_0,z_1,z_2)$ to $(u,v,x,y)$, where $z=u+\ri v$ and $w=x+\ri y$, and compute $\dfrac{d}{dt}\vert_{t=0} \psi(\exp(t\vv)[z_0,z_1,z_2]^t)$, where $\vv$ denotes the matrix in $\fk_1$ on the left side in \eqref{lambdastar}.}
that at a general point on $\ell \cap \chtwo$ with coordinate $z$,
\begin{equation}\label{lambdastar}
\lambdas \begin{pmatrix} a & -\overline{b} & \ri p\\ 0 & \overline{a}-a & b \\ 0 & 0 & -\overline{a} \end{pmatrix} = \realpart(bz) \dibb{u} + \imagpart(bz) \dibb{v} + \realpart(\ri p z^2 + z(a+\overline{a}))\dibb{x}
+ \imagpart(\ri p z^2 + z(a+\overline{a})) \dibb{y}.
\end{equation}
In particular, observe that if the vector field on the right is tangent to $\ell$ at one point, then $b=0$ and it
is tangent to $\ell$ at all points of $\ell \cap \chtwo$.  

Let $d=\dim G$ and consider the algebra $\lambdas \gm$ of vector fields that generate the action of $\GM$ on $M$.
Let $\mpt$ be a point of $\ell \cap \Mh$. Since the orbit $\GM \cdot \mpt$ is two-dimensional with a one-dimensional intersection with $\ell$, we can choose a basis $\vv_1, \ldots, \vv_{d-1}, \vv_d$ of $\gm$ such that
$\lambdas \vv_1, \ldots, \lambda_*\vv_{d-1}$ are tangent to $\ell$ at $\mpt$ while $\lambdas \vv_d$ is transverse to $\ell$ at $\mpt$.  But by the above observation these are tangent to $\ell$ at every point of $\ell \cap \Mh$.  
Then for any $j,k \le d-1$, $\lambdas [ \vv_j, \vv_k] = -[\lambdas \vv_j, \lambdas \vv_k]$ is tangent to $\ell$, and in particular is tangent to $\ell$ at $\mpt$.
Hence $\vv_1, \ldots, \vv_{d-1}$ span a codimension-one subalgebra $\tilde\gm_1$ of $\gm$.
Since $\tilde\gm_1$ preserves $\ell$ then by definition $\tilde\gm_1 \subseteq \gm_1$.  
Since $\GM_1$ is a closed proper subgroup of $\GM$ then $\gm_1$ is a proper subalgebra of $\gm$.  
But $\tilde\gm_1$ has codimension one in $\gm$, so $\tilde\gm_1 = \gm_1$ and $\GM_1$ has codimension one in $\GM$.
\end{proof}

\begin{prop}\label{algprop} The Lie algebra $\gm$ of $\GM$ is 2-dimensional and 
is contained in a subalgebra $\gh \subset \gk$ such that
$\gh_1 = \gh \cap \gk_1$ acts\footnote{When we speak of the action of a subalgebra, we mean the action of the group generated by its image under the exponential map.}
 transitively  on
  $\ell \cap \chtwo$.
\end{prop}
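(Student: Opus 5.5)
The plan is to combine Lemma \ref{codimension} with an explicit analysis of which subalgebras of $\gk$ can occur. First, $\gm \subset \gk$: by Proposition \ref{prop-case2}, $\GM$ fixes $\zprimept$, so its preimage in $SU(2,1)$ stabilizes the complex span of $\e_2-\e_0$. Next I want to show $\dim \gm_1 = 1$, so that $\dim\gm = 2$ by Lemma \ref{codimension}. By \eqref{lambdastar}, an element of $\gk_1$ (where $b=0$) generates on $\ell\cap\chtwo$ the vector field
\[
\realpart(\ri p z^2 + 2(\realpart a) z)\,\dibb{x} + \imagpart(\ri p z^2 + 2(\realpart a) z)\,\dibb{y}.
\]
This is the infinitesimal action in the normal directions $w$; along $\ell$ itself, $\GK_1$ acts on the coordinate $z$ through $\mh_1,\mh_2$, while $\mh_0$ acts trivially on $\ell$. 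Since $\GM$ acts on $\Mh$ with two-dimensional orbits, and $\ell\cap\Mh$ is two-dimensional, the one-dimensional intersection $\GM\cdot\mpt \cap \ell$ shows that $\gm_1$ acts on the two-dimensional set $\ell\cap\Mh$ with one-dimensional orbits. If $\dim\gm_1\ge 2$, the isotropy at a generic point of $\ell\cap\Mh$ would be at least one-dimensional. I will argue, using the explicit matrices, that the only elements of $\gk_1$ fixing a point of $\ell\cap\chtwo$ while preserving $\Mh$ near it are multiples of $\mh_0$ modulo something nontrivial. The term $\mh_0$ rotates the normal $w$-plane, which would make the orbit through $\mpt$ tangent to that rotation, so I must check that this is compatible with transversality and with the orbits having dimension exactly two. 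This case analysis is the main obstacle, and I would first try to exclude it by showing that a generic isotropy containing $\mh_0$ together with two-dimensional orbits forces $\Mh$ to be invariant under a three-dimensional group. That would contradict the cohomogeneity assumption only if the orbits became three-dimensional, which would already be the desired homogeneity.

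Granted $\dim\gm = 2$, let $\vv_1\in\gm_1$ and $\vv_2 \in \gm\setminus\gm_1$. Then $\vv_2$ has $b\neq 0$, and $[\vv_1,\vv_2]\in\gm$. I would compute brackets in $\gk$ explicitly. Writing $\vv_1 = a_1$-part $+\,p_1\mh_2$ and $\vv_2$ with entries $(a_2,b_2,p_2)$, the bracket's $b$-entry is $(\overline{a_1}-2a_1)b_2$ type, and its $p$-entry involves $\imagpart(\overline{b}\,b)$ and $a$-terms. Closure of $\gm$ under brackets gives algebraic constraints on $a_1,p_1$. Because $\vv_1$ must move points of $\ell\cap\chtwo$, it is not a pure multiple of $\mh_0$, so $(\realpart a_1, p_1)\ne 0$.

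Finally, I would define $\gh$ as $\gm + \R\,\vv'$ for a suitable element $\vv'\in\gk_1$ chosen so that $\gh_1 = \gh\cap\gk_1$ is spanned by $\vv_1$ and $\vv'$. I would pick $\vv'$ among combinations of $\mh_1,\mh_2,\mh_0$ so that the $\lambdas$-images of $\vv_1,\vv'$, namely $z$-fields of the form $\ri p z^2+2(\realpart a)z$ together with the translation generated by $\mh_1$, span the tangent space of the half-plane $\realpart z<0$ at every point. This gives transitivity on $\ell\cap\chtwo$, since $\mh_1,\mh_2$ generate the affine-type group acting transitively there. I would then verify closure of $\gh$ under brackets, again using the constraints obtained above.
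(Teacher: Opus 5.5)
There is a genuine gap, and it is exactly where the statement has content.

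\textbf{The dimension count.} Once $\mathfrak g\subset\mathfrak k$, the second clause is automatic with $\mathfrak h=\mathfrak k$, since $\mathfrak k\cap\mathfrak k_1=\mathfrak k_1$ acts transitively on $\ell\cap\chtwo$. What must really be proved is $\dim\mathfrak g=2$, i.e.\ (by Lemma~\ref{codimension}) $\dim\mathfrak g_1=1$. That is the step you leave open as ``the main obstacle.''

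Your reduction is sound. $G_1\cdot\mathrm m\subset G\cdot\mathrm m\cap\ell$ is at most one-dimensional, so $\dim\mathfrak g_1\ge 2$ produces a nonzero isotropy element. The isotropy in $\mathfrak k_1$ of a point of $\ell\cap\chtwo$ is exactly $\R\mathtt m_0$: along $\ell$, $x_0\mathtt m_0+x_1\mathtt m_1+x_2\mathtt m_2$ induces $\dot z=-(2x_1z+\ri x_2z^2)$, which vanishes at a point with $\realpart z<0$ only if $x_1=x_2=0$. But you then need $\mathtt m_0\notin\mathfrak g$, and the route you sketch (invariance under a three-dimensional group) gives no contradiction, as you concede.

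The missing idea is to bracket with the transverse generator $\mathtt w\in\mathfrak g$, whose parameter $b$ is nonzero. $[\mathtt m_0,\mathtt w]$ is the element of $\mathfrak k$ with parameters $(a,b,p)=(0,-3\ri b,0)$. Every element of $\mathfrak g=\mathfrak g_1\oplus\R\mathtt w$ has $b$-entry in $\R b$, so this bracket is not in $\mathfrak g$. Geometrically, $\exp(t\mathtt m_0)$ fixes $\ell$ pointwise and multiplies the normal coordinate $w=z_1/z_0$ by $e^{-3\ri t}$. It therefore cannot preserve the real line in which $T_{\mathrm m}M$ meets the normal plane to $\ell$.

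The paper organizes the same facts as a case split. $\mathfrak g_1=\mathfrak k_1$ is impossible because $K_1$ is transitive on $\ell\cap\chtwo$. A two-dimensional $\mathfrak g_1$ either contains $\mathtt m_0$, which is excluded by the bracket, or projects onto $\operatorname{span}\{\mathtt m_1,\mathtt m_2\}$ modulo the central, $\ell$-trivial $\mathtt m_0$, in which case it is again transitive.

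Incidentally, a field with $b=0$ is tangent to $\ell$, not normal. Along $w=0$ one has $\dot w=bz$ and $\dot z=-(a+\bar a)z-\ri pz^2$; the coordinate labels in \eqref{lambdastar} are interchanged. Also, since $\mathtt m_0$ fixes $\mathrm m$, there is no orbit through $\mathrm m$ to be tangent to its rotation.

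\textbf{The construction of $\mathfrak h$.} This is only a plan, and in one subcase it cannot succeed. The bracket of $\mathtt v_1=x_0\mathtt m_0+x_1\mathtt m_1+x_2\mathtt m_2$ with $\mathtt w$ has top-middle entry $-(x_1+3\ri x_0)\bar b$, which forces $x_0=0$. When $x_1\neq 0$ the diagonal entries force $a\in\R$, and then $\operatorname{span}\{\mathtt m_1,\mathtt m_2,\mathtt w\}$ is a subalgebra, as in the paper.

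But suppose $\mathfrak g_1=\R\mathtt m_2$. The relation $[\mathtt m_2,\mathtt w]=-2(\realpart a)\mathtt m_2$ imposes nothing on $\imagpart a$. For $\mathtt v'=\mathtt m_1+y_0\mathtt m_0$, the requirement $[\mathtt v',\mathtt w]\in\operatorname{span}\{\mathtt m_2,\mathtt v',\mathtt w\}$ forces $y_0=0$ and $\imagpart a=0$. So if $\imagpart a\neq 0$, no $\mathfrak h=\mathfrak g+\R\mathtt v'$ with $\mathfrak h_1$ transitive exists.

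The paper's assertion that $\operatorname{span}\{\mathtt m_1,\mathtt m_2,\mathtt w\}$ is still a subalgebra in this subcase fails for the same reason. The clause can then be met only by a larger algebra such as $\mathfrak k$, whose group need not preserve $M$.

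This subcase does occur. Take the hypersurface swept out by the lines through $\pi(1,0,0)$ whose parameter $z_1/z_2$ runs over a circle through $0$. Its symmetry algebra is $\operatorname{span}\{\mathtt m_2,\mathtt w\}$ with $\realpart a=0\neq\imagpart a$. That algebra is two-dimensional, so this hypersurface is not homogeneous.
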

\begin{proof}Since $\gm_1$ is the Lie algebra of $\GM_1$ then $\gm_1=\gm \cap \gk_1$.  Since $\gm_1$ has codimension one in $\gm$, there is some $\mw \in \gk \setminus \gk_1$ be such that $\gm = \gm_1 \oplus \{ \mw \}$.
Then $\mw$ is of the form given by the matrix in \eqref{ggeneric} for some 
values of the parameters $a,b,p$.
Because the orbits of $\exp(t \mw)$ must be transverse to $\ell$, we have $b \ne 0$.

Since $\GM$ has 2-dimensional orbits it is immediate that $\gm_1$ has dimension at least one, and since
$\gm_1 \subset \gk_1$ then $\gm_1$ has dimension at most 3.  However, if $\gm_1=\gk_1$ then $\GM_1 =\GK_1$.
As we will calculate below, $\GK_1$ acts transitively on $\ell \cap \chtwo$, but the orbits of $\GM_1$ 
are subsets of those of $\GM$, which have 1-dimensional intersections with $\ell$.
Thus, $\gm_1$ is either 1-dimensional or 2-dimensional.

\medskip
\noindent
{(1)} Assume that $\gm_1$ is 1-dimensional, spanned by $\mv = x_0 \mh_0 + x_1 \mh_1 + x_2 \mh_2$.  Because $\exp(t\mv)$ must act non-trivially on $\ell$, $x_1, x_2$ cannot both be zero.  Furthermore, since the top middle entry of $[\mv, \mw]$ equals $-(x_1 +3\ri x_0)\overline{b}$, and $[\mv, \mw]$ must be in the real span of $\mv$ and $\mw$, then $x_0=0$.  

Suppose that $x_1\ne 0$.  Since $[\mv, \mw]  -x_1 \mw$ must be a real multiple of $\mv$, but its top left
entry equals $-a x_1$,  then $a$ must be real.  If we let $\gh_1 = \{ \mh_1, \mh_2\}$, then 
$\gm_1$ is a subalgebra of $\gh_1$ and one can check that $\gh = \gh_1 \oplus \{ \mw \}$ is a subalgebra of $\gk$.

If $x_1 = 0$, then we can take $\mv = \mh_2$.  If we again let $\gh_1 = \{ \mh_1, \mh_2\}$, then the conclusion that $\gh = \gh_1 \oplus \{ \mw \}$ is a subalgebra of $\gk$ still holds.

It remains to verify that matrices in $\exp(\gh_1)$ act transitively on the open set $\ell \cap \CH^2$.
These matrices are of the form
$$\exp( x_1 \mh_1 + x_2 \mh_2) = \begin{pmatrix} e^{x_1} & 0 & \ri x_2 \sigma(x_1) \\ 0 & 1 & 0 \\ 0 & 0 & e^{-x_1}\end{pmatrix}$$
where $\sigma(x)$ is the smooth even function of $x$ that equals $\dfrac{\sinh x}{x}$ when $x\ne 0$.
If we employ an affine chart 
$\psi_1:\ell \setminus \zprimept \to \C$ defined by $\psi_1 \circ \pi( (z,0,1)^t)=z$, then in terms of the chart this action is
$$z \mapsto e^{2x_1} z + \ri x_2 \sigma(x_1).$$
(Note that this $z$ is the reciprocal of the coordinate on $\ell$ given in \eqref{laffine}.)
This action is easily seen to be transitive on the left half-plane (i.e.,  where $\realpart z <0)$, which corresponds to $\ell \cap \chtwo$ under the chart.  (For example, to take $z=-1$ to an arbitrary point $-u^2 + \ri v$ in the left half-plane, where $u,v \in\R$ with $u>0$, let $x_1 = \ln u$ and $x_2 = v/\sigma(x_1)$.)

\medskip
\noindent
{(2)} 
Assume that $\gm_1$ is 2-dimensional.  If $\mh_0 \in \gm_1$ then $[\mh_0, \mw] \in \gm = \gm_1 \oplus \{ \mw\}$.  However, a direct computation shows that  $[\mh_0, \mw]$ is not even in the real span of $\gk_1$ and $\mw$.  Hence we can assume that
$\gm_1 = \{ \mh_1 + c_1 \mh_0, \mh_2 + c_2 \mh_0\}$ for some coefficients $c_1,c_2$.  But then, as above,
$\exp(\gm_1)$ acts transitively on $\ell\cap \CH^2$, contradicting our assumption that the orbits of $\GM$ are 2-dimensional within $\Mh$.
\end{proof}

We now continue with the proof of Theorem \ref{side-theorem}.
For convenience we use a dot to represent the action of $\GK$ and its subgroups on $\Mh$, and let
$L = \ell \cap \Mh$.

First, we claim that $\GM \cdot L$ (i.e., the union of $\GM$-orbits of points in $L$) is an open subset in $\Mh$. Consider the smooth map $\Phi: \R \times L \to \Mh$ defined by $\Phi( t, \upt) = \exp(t \mw) \cdot \upt$ for $\upt \in L$.  Given any point $\mpt \in L$, the rank of $\Phi$ at $(\mpt, 0)$ is three, and hence $\mpt$ is contained in an open neighborhood of $\Mh$ that is in turn contained in $\GM \cdot L$.  By applying the $\GM$-action, we see that the same is true for a general point
of $\GM \cdot L$.

Similarly, for any extended ruling $\ell'$ of $\Mh$, the orbit union $\GM \cdot (\ell' \cap\Mh)$ is an open subset of $\Mh$.
Since distinct orbits are disjoint, if $\GM \cdot (\ell' \cap\Mh)$ and $G \cdot L$ are not disjoint then some point
of $\ell' \cap\Mh$ belongs to $G\cdot L$.  Since both $\ell'$ and $G\cdot \ell$ are extended rulings of $\Mh$ passing through this point, then $\ell' = \GM \cdot \ell$, and so $\GM \cdot (\ell' \cap\Mh)=\GM \cdot L$.  Since
$\Mh$ is connected, it cannot be partitioned into two or more disjoint open subsets, so $\GM \cdot L$ is all of $\Mh$.

Let $\GH$ be the connected matrix group generated by $\exp(\gh)$.  Then $\GM \subset \GH$. Since every point of $\Mh$ is in the $\GM$-orbit of a some point of $L$, and $\GH$ acts transitively on $\ell \cap \chtwo$, then $\Mh$ is homogeneous under $\GH$. 
\end{proof}

 \section{Concluding Remarks}

We began this paper seeking the ``simplest" ruled hypersurfaces in the nonflat complex space forms.
Our original idea was to choose a ``plane curve" and erect a perpendicular ruling over each point of the curve.  
We worked out the characteristic parameters $\a$ and $\b$ for the resulting hypersurface.
Within this class of hypersurfaces, we chose for further consideration those for which the plane curves with constant curvature were used.  We noted that these hypersurfaces exhibit a high degree of symmetry -- each is the union of two-dimensional orbits of an action of a group $G$ of isometries of the ambient space.

The question then arose -- are these the only such ``cohomogeneity-one" possibilities.  The answer was ``almost".  One more example emerged in the $\chn$ case, which we worked out in detail for $n=2$.  It is analogous to those already constructed but uses a plane curve of constant curvature in the 
part of $\cpn$ that is exterior to $\chn$, where the K\"ahler metric is indefinite.

In the $n=2$ case, at least, this enlarged set of hypersurfaces turns out to be precisely those ruled analytic cohomogeneity-one hypersurfaces for which the orbits and the rulings meet transversly.  Whether the transversality condition and the analyticity condition are necessary to characterize this class of hypersurfaces remains a question for further study. 

 \end{document}